\documentclass[10pt]{amsart}

%
%
%
%
\usepackage{comment, amsmath, amssymb, amsgen, amsthm, amscd, xspace, color, epsfig, float, epic, multirow, array, lscape}
\usepackage{extarrows}

\makeatletter
\newcommand{\rmnum}[1]{\romannumeral #1}
\newcommand{\Rmnum}[1]{\expandafter\@slowromancap\romannumeral #1@}

\newcommand{\vf}{\varphi}

%
%
%
%
%

%

%

%

%

%


\newcommand{\ch}{\operatorname{ch}}

\newcommand{\Ker}{\mathrm{Ker}}

\newcommand{\Soc}{\mathrm{Soc}}
\newcommand{\Rad}{\mathrm{Rad}}

\newcommand{\ra}{\rightarrow}

\newcommand{\frb}{\mathfrak{b}}

\newcommand{\frg}{\mathfrak{g}}
\newcommand{\frh}{\mathfrak{h}}

\newcommand{\frl}{\mathfrak{l}}

\newcommand{\frn}{\mathfrak{n}}

\newcommand{\frp}{\mathfrak{p}}
\newcommand{\frq}{\mathfrak{q}}

\newcommand{\fru}{\mathfrak{u}}

\newcommand{\bbC}{\mathbb{C}}

\newcommand{\bbN}{\mathbb{N}}

\newcommand{\bbZ}{\mathbb{Z}}

\newcommand{\caF}{\mathcal{F}}

\newcommand{\caJ}{\mathcal{J}}
\newcommand{\caK}{\mathcal{K}}

\newcommand{\caO}{\mathcal{O}}

\newcommand{\caT}{\mathcal{T}}

%

%
%
\newtheorem{theorem}[equation]{Theorem}

\newtheorem{prop}[equation]{Proposition}
\newtheorem{lemma}[equation]{Lemma}

\theoremstyle{remark}
\newtheorem{remark}[equation]{Remark}

\theoremstyle{definition}
\newtheorem{definition}[equation]{Definition}

%


%
%
\numberwithin{equation}{section} \setcounter{secnumdepth}{1}
%

%
%
\begin{document}

\title[Jantzen Conjecture]{Jantzen Conjecture for singular characters}

\author{Wei Xiao}
\address[Xiao]{College of Mathematics and statistics, Shenzhen Key Laboratory of Advanced Machine Learning and Applications, Shenzhen University,
Shenzhen, 518060, Guangdong, P. R. China}
\email{xiaow@szu.edu.cn}

\thanks{The authors are supported by the National Science Foundation of China (Grant No. 11701381) and Guangdong Natural Science Foundation (Grant No. 2017A030310138).}

\subjclass[2010]{17B10, 22E47}

\keywords{Verma module, Jantzen filtration, Jantzen Conjecture}


\bigskip

\begin{abstract}
We show that the Jantzen filtration of a Verma module (possibly singular) coincides with its radical filtration. It implies that the Jantzen Conjecture on Verma modules holds for all infinitesimal characters, while the regular case was settled by Beilinson and Bernstein by geometric methods and reproved by Williamson by an algebraic approach. The coincidence between Jantzen filtration and radical filtration is also generalized to the case of parabolic Verma modules by Shan's results.
\end{abstract}


\maketitle

%
%
\section{Introduction}
%
%
Let $\frg$ be a complex semisimple Lie algebra with a fixed Borel subalgebra $\frb$ and a Cartan subalgebra $\frh\subset\frb$. Let $\Phi$ be the root system of $(\frg, \frh)$ with the positive system $\Phi^+$ and simple system $\Delta$ corresponding to $\frb$. The \emph{Verma module} is
\[
M(\lambda):=U(\frg)\otimes_{U(\frb)}\bbC_{\lambda-\rho},
\]
where $\bbC_{\lambda-\rho}$ is a one dimensional $\frb$-module of weight $\lambda-\rho$ and $\rho$ is the half sum of positive roots. Those modules are standard modules in the BGG category $\caO$ \cite{BGG}. Denote $\Phi_\mu^+=\{\alpha\in\Phi^+\mid \langle\mu, \alpha^\vee\rangle\in\bbZ^{>0}\}$. Here $\langle, \rangle$ is the pair between $\frh^*$ and $\frh$ and $\alpha^\vee$ is the \emph{coroot} of $\alpha$ (see \cite[\Rmnum{6}, 1]{B}).

\smallskip
\noindent{\bf Jantzen filtration.} Let $\mu\in\frh^*$. Then $M(\mu)$ has a filtration by submodules
\begin{equation}\label{ieq1}
M(\mu)=M(\mu)^0\supset M(\mu)^1\supset M(\mu)^2\supset\ldots
\end{equation}
with $M(\mu)^i=0$ for large $i$, such that:
\begin{itemize}
\item [(1)] Every quotient $M(\mu)^i/M(\mu)^{i+1}$ has a nondegenerate contravariant form.

\item [(2)] $M(\mu)^1$ is the unique maximal submodule of $M(\mu)$.

\item [(3)] There is a formal character formula:
\begin{equation}\label{ieq2}
\sum_{i>0}\ch M(\mu)^i=\sum_{\alpha\in\Phi_\mu^+}\ch M(s_\alpha\mu).
\end{equation}
\end{itemize}
The equation (\ref{ieq2}) is known as \emph{Jantzen sum formula}.

This remarkable filtration introduced by Jantzen \cite{J2} provides deep information about Verma modules. Since nonzero homomorphisms between Verma modules are always injective, Jantzen raised the following famous question.

\smallskip
\noindent{\bf Jantzen Conjecture.} Suppose that $M(\mu)\subset M(\lambda)$ for $\lambda, \mu\in\frh^*$. Then
\begin{equation}
M(\mu)\cap M(\lambda)^i=M(\mu)^{i-r}
\end{equation}
for $i\geq r$, where $r=|\Phi_\lambda^+|-|\Phi_\mu^+|$.

\smallskip

Understanding the composition factor multiplicites of Verma modules is a core problem in representation theory around 1980s. A precise conjecture was proposed (for regular integral infinitesimal character) by Kazhdan and Lusztig in their 1979 paper \cite{KL}. Soon Gabber and Joseph showed that a weak version of the Jantzen Conjecture lead to the validity of Kazhdan-Lusztig Conjecture \cite{GJ}. This implies that the Jantzen Conjecture may be stronger than the Kazhdan-Lusztig Conjecture. Almost at the same time, the Kazhdan-Lusztig Conjecture was proved independently by Beilinson-Bernstein \cite{BB1} and Brylinsky-Kashiwara \cite{BK} through geometric techniques. After that, the singular and nonintegral cases were solved by Soergel \cite{S1, S2}. The proof of the Jantzen Conjecture, however, requires an extension of the geometric methods. The main idea was formulated in the early 1980s by Beilinson and Bernstein, while a complete proof (for regular case) was published in 1993 \cite{BB2}. In this paper, we find a method to translate the regular Jantzen filtration into a singular one (Proposition \ref{cprop1}). Thus the full Jantzen Conjecture (Theorem \ref{thm2}) follows from \cite{BB2} and the rigidity of Verma modules.

It is natural to expect an algebraic approach to these purely algebraic problems. In 1990, Soergel reformulated the Kazhdan-Lusztig Conjecture by using some special modules, so called Soergel modules \cite{S2}. These modules can be obtained from the Soergel bimodules \cite{EMTW} introduced by Soergel \cite{S3}. Soergel showed that the Kazhdan-Lusztig Conjecture holds if and only if the indecomposable Soergel bimodules are categorification of the Kazhdan-Lusztig basis of the Hecke algebra. In 2007, He also proved that such a theory can be generalized to any Coxeter system \cite{S4}. The analogue of the Kazhdan-Lusztig Conjecture in these settings, which is now konwn as Soergel Conjecture, was eventually settled by Elias and Williamson in 2014 \cite{EW}. Lately, Williamson also got an algebraic proof of the regular Jantzen Conjecture by giving the local hard Lefschetz theorem for Soergel bimodules \cite{W}. Combined with this, the full Jantzen Conjecture can be obtained by algebraic methods since the approach in this paper is purely algebraic.

The Jantzen Conjecture also plays a significant role in the classification of unitary Harish-Chandra modules. It is known that parabolic Verma modules appear as intermediate modules in the cohomological induction \cite{KV}, which can be used to constructed all the simple Harish-Chandra modules of a real reductive Lie group. The signature of invariant Hermitian forms on these intermediate modules may be used to calculate the signature of forms associated with the cohomologically induced modules and thus determine their unitarity.

The application of Jantzen filtration for computing these signatures was initiated by Vogan \cite{V}. Yee extended this idea in the setting of highest weight modules. In \cite{Y1}, the signed Kazhdan-Lusztig polynomials are defined to give signatures of forms on regular highest weight modules, while such polynomials were shown equal to classical Kazhdan-Lusztig polynomials evaluated at $-q$ and multiplied by a sign in \cite{Y2}. The proof depends heavily on the validity of regular Jantzen conjecture. It is expected that results in this paper could shed some light on the singular case of related problem, which have attracted more attention since the irreducible unitary Harish-Chandra modules with strongly regular infinitesimal characters are already known \cite{SR}. In \cite{ALTV}, a general algorithm was built to find all the unitary representations, where usage of regular Jantzen Conjecture is still an essential step. The result of the singular case might be used to calculate unitary representations more efficiently.

The Jantzen filtration can also be defined in various setting such as quantum groups \cite{APW} and Weyl modules \cite{JM} and superalgebras \cite{SZ}. It is natural to consider similar problems in those settings.

The paper is organized as follows. The Jantzen filtration is defined in Section 2. In section 3, we solve a problem raised by Jantzen which states that the translation functor sends Jantzen filtrations of regular Verma modules to the filtrations of singular ones. In section 4, we show that Jantzen filtrations and radical filtrations are the same for Verma modules. This immediately yields the Jantzen Conjecture for singular infinitesimal characters. In section 5, The coincidence between Jantzen filtrations and radical filtrations is extended to the case of parabolic Verma modules.

I am very grateful to Peng Shan for explaining her results about Jantzen filtrations to me. I would also like to thank Wai Ling Yee for her insightful comments.

%
%
\section{Preliminaries}
%
%


The Jantzen filtration for highest weight modules will be defined at the end of this section, after necessary notation are introduced. Most results in this section can be found in \cite{H3} or \cite{GJ}.

\subsection{Complex Lie algebras}

Recall that $\frg$ is a complex semisimple Lie algebra with a fixed Borel subalgebra $\frb=\frh\oplus\frn$, where $\frh$ is a Cartan subalgebra and $\frn$ the nilradical of $\frb$. Set $Q:=\bbZ\Phi$, where $\Phi$ is the root system of $(\frg, \frh)$ with a simple system $\Delta$ corresponding to $\frb$. Choose a basis $E_\beta$ ($\beta\in\Phi$), $H_\alpha$ ($\alpha\in\Delta$) of $\frg$ so that $E_\beta$ is a nonzero vector in the root space corresponding to $\beta$ and $[E_\alpha, E_{-\alpha}]=H_\alpha$. There is an anti-involution $\sigma$ on $\frg$ which fixes $h\in\frh$ and interchanges $E_\beta$ with $E_{-\beta}$ for $\beta\in\Phi$. Set $\overline\frn=\sigma(\frn)$. Then $\frg=\overline\frn\oplus\frh\oplus\frn$. Denote by $W$ the Weyl group of $\Phi$. The action of $W$ on $\frh^*$ is given by $s_\alpha\lambda=\lambda-\langle\lambda, \alpha^\vee\rangle\alpha$ for $\alpha\in\Phi$ and $\lambda\in\frh^*$. A weight $\lambda\in\frh^*$ is called \emph{dominant} (resp. \emph{anti-dominant}) if $\langle\lambda, \alpha^\vee\rangle\not\in\bbZ^{<0}$ (resp. $\langle\lambda, \alpha^\vee\rangle\not\in\bbZ^{>0}$) for all $\alpha\in\Delta$.

\smallskip


\subsection{Extended characters} Let $A=\bbC[t]_{(t)}$ for an indeterminate $t$. Set $\frg_A:=A\otimes_\bbC\frg$ and similarly define $\frb_A$, $\frh_A$, $\frn_A$, $\overline\frn_A$, .... Note that each $\lambda\in\frh_A^*$ determines an $\frh_A$-module $A_\lambda$ by $h\cdot a=\lambda(h)a$ for $a\in A$, which can be viewed as a $\frb_A$-module with trivial $\frn_A$ action. The \emph{Verma module} (over $A$) is defined by
\[
M(\lambda)_A:=U(\frg_A)\otimes_{U(\frb_A)} A_{\lambda-\rho}.
\]
The modules $M(\lambda)_A$ possess a character $\chi_\lambda$, which is an algebra homomorphism from the center $Z(\frg_A)$ of $U(\frg_A)$ to $A$ so that $z\cdot v=\chi_\lambda(z)v$ for all $z\in Z(\frg_A)$ and $v\in M(\lambda)_A$. Moreover, $\chi_\lambda=\chi_\mu$ when $\mu\in W\lambda$.

%

\subsection{The category $\caK_C$} The above Verma modules can be well studied in some special subcategories of $\frg_A$-modules. If $M$ is an $\frh_A$-module, for $\lambda\in\frh_A^*$, set
\[
M_\lambda:=\{v\in M\mid h\cdot v=\lambda(h)v,\ \mbox{for all}\ h\in\frh_A\}.
\]
Fix $\mu\in\frh_A^*$. Put $C:=\mu+Q\subset\frh_A^*$. Denote by $\caK_C$ the full subcategory of $\frg_A$-modules $M$ such that:

\smallskip
\begin{itemize}
  \item [(1)] $M=\sum_{\nu\in C} M_\nu$.

\smallskip

  \item [(2)] $U(\frn_A)v$ is a finitely generated $A$-module for any $v\in M$.

\smallskip
  \item [(3)] $M$ is finitely generated as a $\frg_A$-module.
\end{itemize}
\smallskip
Denote by $\lambda\ra\overline\lambda$ the canonical map $\frh_A^*\ra\frh_A^*/t\frh_A^*\simeq\frh^*$. We call $D\subset C$ a \emph{block} if $\overline D=\{\bar\nu\mid\nu\in D\}$ is a nonempty intersection of $\overline C=\{\bar\nu\mid\nu\in C\}$ with a $W$-orbit in $\frh^*$. In other words, $\overline D=\overline C\cap W\overline\lambda$ for a $\lambda\in C$. Set
\[
\caJ_D:=\cap_{\lambda\in D}\Ker\,\chi_\lambda.
\]
For $M\in\caK_C$, define the submodule
\[
M_D:=\{v\in M\mid \forall\, x\in \caJ_D, \exists\, n\in\bbZ^{\geq0}\ \mbox{such that}\ x^n\cdot v=0\}.
\]

\begin{prop}[{\cite[Proposition 1.8.4]{GJ}}]\label{prop1}
Let $M\in\caK_C$. Then $M=\bigoplus_{D}M_D$, where the sum is taken over all blocks $D\subset C$. In particular, $M_D$ is nonzero for only finitely many blocks $D$.
\end{prop}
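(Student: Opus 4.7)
The plan is to reduce the block decomposition to a Chinese Remainder argument on $Z(\frg_A)$, using the coprimality of the ideals $\caJ_D$ for distinct blocks.

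First, I would show that every weight space $M_\nu$ is finitely generated over $A$. By condition (3), $M$ is generated as a $\frg_A$-module by finitely many weight vectors $v_1,\dots,v_k$; by condition (2), each $U(\frn_A)v_i$ is $A$-finitely generated, hence supported on only finitely many weights $\nu_i+\beta$. The PBW decomposition $U(\frg_A)=U(\overline\frn_A)U(\frh_A)U(\frn_A)$ then expresses $M_\nu$ as a finite $A$-linear sum of $U(\overline\frn_A)_\gamma\cdot(U(\frn_A)v_i)_{\nu_i+\beta}$, and each $U(\overline\frn_A)_\gamma$ is a free $A$-module of finite rank by PBW, so $M_\nu$ is $A$-finitely generated. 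In particular, for every $v\in M$, $Z(\frg_A)\cdot v$ is a finitely generated $A$-submodule of some $M_\nu$, and $R_v:=Z(\frg_A)/\operatorname{Ann}_{Z(\frg_A)}(v)$ is a finite commutative $A$-algebra.

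Second, I would prove the coprimality $\caJ_{D_1}+\caJ_{D_2}=Z(\frg_A)$ for distinct blocks $D_1,D_2$. Since $\bar D_1,\bar D_2$ are distinct $W$-orbits in $\frh^*$, the Harish-Chandra theorem gives $z\in Z(\frg)\subset Z(\frg_A)$ with $\bar\chi_{D_1}(z)\neq\bar\chi_{D_2}(z)$, and hence $\chi_\lambda(z)-\chi_{\lambda'}(z)$ is a unit in $A$ for every $\lambda\in D_1$, $\lambda'\in D_2$. The monic polynomials
\[
P(T)=\prod_{\lambda\in D_1}(T-\chi_\lambda(z)),\qquad Q(T)=\prod_{\lambda'\in D_2}(T-\chi_{\lambda'}(z))\in A[T]
\]
have coprime reductions in $\bbC[T]$, and Nakayama applied to the finite $A$-module $A[T]/(P,Q)$ (finiteness uses that $P$ is monic) lifts this to $(P,Q)=A[T]$. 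Evaluating a Bezout identity at $z$ gives $1\in\caJ_{D_1}+\caJ_{D_2}$, since $P(z)\in\caJ_{D_1}$ and $Q(z)\in\caJ_{D_2}$.

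Third, I would derive both existence and directness of the decomposition. Fix $v\in M$. The reduction $\bar M:=M/tM$ is a $\frg$-module in the BGG category $\caO$ (finitely generated over $\frg$, weights in $\bar C$, locally $\frn$-nilpotent), whose composition factors have classical central characters of the form $\bar\chi_D$ for blocks $D$ of $\bar C$. This identifies every prime of $R_v$ as containing $\caJ_{D_i}$ for a unique block $D_i$ in some finite collection $D_1,\dots,D_\ell$, whence $\prod_i\caJ_{D_i}\subset\sqrt{\operatorname{Ann}(v)}$ and, by the Noetherian property combined with the coprimality from the second step, $\bigcap_i\caJ_{D_i}^N=(\prod_i\caJ_{D_i})^N\subset\operatorname{Ann}(v)$ for some $N$. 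The Chinese Remainder Theorem provides orthogonal idempotents $e_i\in Z(\frg_A)/\bigcap_j\caJ_{D_j}^N$ summing to $1$; for any lifts $\tilde e_i\in Z(\frg_A)$, the vectors $v_i:=\tilde e_i\cdot v$ satisfy $\sum v_i=v$ and $\caJ_{D_i}^N\cdot v_i=0$ (because $\caJ_{D_i}^N\tilde e_i\subset\bigcap_j\caJ_{D_j}^N\subset\operatorname{Ann}(v)$), so $v_i\in M_{D_i}$. For directness, if $w\in M_{D_1}\cap\sum_{j\neq 1}M_{D_j}$ then $w$ is annihilated by some power of $\caJ_{D_1}$ and by some power of $\bigcap_{j\neq 1}\caJ_{D_j}$; pairwise coprimality implies $\caJ_{D_1}+\bigcap_{j\neq 1}\caJ_{D_j}=Z(\frg_A)$, and a Bezout identity for suitable powers forces $w=0$. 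Finite generation of $M$ then forces only finitely many $M_D$ to be nonzero.

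The main obstacle is identifying every prime of $R_v$ with a block of $\bar C$, which requires recognizing $\bar M$ as an object of BGG category $\caO$ and invoking its classical block decomposition, together with the analogous analysis for the minimal primes of $R_v$ coming from the generic central characters $\chi_\lambda$ with $\lambda\in C$. Once this identification is in hand, the remaining Chinese Remainder and Bezout machinery runs mechanically, and the lack of a Henselian structure on $A$ is bypassed by working modulo an ideal that already annihilates $v$.
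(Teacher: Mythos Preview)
The paper does not prove this proposition; it is stated as a direct citation of Gabber--Joseph \cite[Proposition~1.8.4]{GJ}, with no argument supplied. There is therefore no in-paper proof to compare your proposal against.

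Your outline is the natural Chinese Remainder approach and is, as far as it goes, correct: the finite generation of weight spaces over $A$ (Step~1) and the coprimality of the $\caJ_D$ via Harish--Chandra plus Nakayama on $A[T]/(P,Q)$ (Step~2) are both clean and complete. The genuine gap is exactly where you place it. Knowing that $\bar M=M/tM$ lies in category $\caO$ only controls the \emph{closed} fibre: it tells you that the maximal ideals of $R_v$ (all of which lie over $(t)\subset A$, since $R_v$ is finite over the local ring $A$) contain images of suitable $\caJ_D$. It does not by itself show that the \emph{minimal} primes of $R_v$ lying over $(0)\subset A$ contain some $\caJ_D$, and that inclusion is exactly what is needed for $\prod_i\caJ_{D_i}\subset\sqrt{\operatorname{Ann}(v)}$. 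Your closing remark about ``generic central characters $\chi_\lambda$ with $\lambda\in C$'' points at the correct fix---pass to the fraction field $K=\operatorname{Frac}(A)$ and run a parallel category-$\caO$ analysis for $M\otimes_A K$---but this is a second, independent argument that must actually be carried out, not absorbed into the first. Once both fibres are handled, the CRT and Bezout machinery does run mechanically, so your proposal is a sound plan with one clearly identified step left to fill in.
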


Let $\mathrm{pr}_D: M\ra M_D$ be the natural projection on $\caK_C$, that is, $\mathrm{pr}_D M=M_D$. Evidently $\mathrm{pr}_D$ is exact.

\subsection{Contravariant forms}

Note that the anti-involution $\sigma$ extends naturally to an anti-automorphism on the enveloping algebra $U(\frg_A)$.  Let $M$ be a $\frg_A$-module, we say a symmetric bilinear form $\caF(,)$ on $M$ is \emph{contravariant} if
\[
\caF(u\cdot v_1, v_2)=\caF(v_1, \sigma(u)\cdot v_2)
\]
for all $v_1, v_2\in M$ and $u\in U(\frg_A)$.

\begin{lemma}\label{Alem1}
Let $\caF$ be a contravariant form on $M\in\caK_C$. Then
\begin{itemize}
  \item [(1)] $\caF(M_\lambda, M_\mu)=0$ for $\lambda\neq\mu$ in $\frh_A^*$.
  \item [(2)] $\caF(M_{D_1}, M_{D_2})=0$ for blocks $D_1\neq D_2$ in $C$.
  \item [(3)] For $i=1, 2$, suppose that $M_i$ is a $\frg_A$-module with contravariant form $\caF_i$. Then $\caF=\caF_1\otimes \caF_2$ is a contravariant form on $M_1\otimes M_2$.
\end{itemize}
\end{lemma}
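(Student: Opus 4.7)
I would handle the three parts in sequence. Part (1) follows from the fact that the anti-involution $\sigma$ fixes $\frh$ pointwise: for $v_1\in M_\lambda$, $v_2\in M_\mu$ and any $h\in\frh$, contravariance yields
\[
\lambda(h)\caF(v_1,v_2)=\caF(h v_1,v_2)=\caF(v_1,h v_2)=\mu(h)\caF(v_1,v_2),
\]
so $(\lambda(h)-\mu(h))\caF(v_1,v_2)=0$ in $A$. If $\lambda\neq\mu$, I pick $h\in\frh\subset\frh_A$ with $\lambda(h)\neq\mu(h)$ in $\bbC\subset A$; since $A=\bbC[t]_{(t)}$ is an integral domain, this forces $\caF(v_1,v_2)=0$.

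For part (2), the essential new ingredient is that $\sigma$ restricts to the identity on the center $Z(\frg_A)$, a consequence of the Harish-Chandra isomorphism combined with $\sigma|_{\frh_A}=\mathrm{id}$. Contravariance thus upgrades to $\caF(zv,w)=\caF(v,zw)$ for every $z\in Z(\frg_A)$. Given $v_1\in M_{D_1}$ and $v_2\in M_{D_2}$, each cyclic submodule $Z(\frg_A)v_i$ is finitely generated and annihilated by some power $\caJ_{D_i}^{n_i}$ (using Noetherianity of $Z(\frg_A)$ together with the definition of $M_{D_i}$ applied to a finite generating set of $\caJ_{D_i}$). Since $D_1\neq D_2$, the ideals $\caJ_{D_1}$ and $\caJ_{D_2}$ reduce modulo $t$ to distinct maximal ideals of $Z(\frg)$. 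My plan is to exploit this coprimality, via a prime-avoidance style argument, to produce $z\in\caJ_{D_1}$ acting bijectively on $Z(\frg_A)v_2$; then $v_2=z^n w$ for some $w\in Z(\frg_A)v_2$, and choosing $n$ large enough that $z^n v_1=0$ gives
\[
\caF(v_1,v_2)=\caF(v_1,z^n w)=\caF(z^n v_1,w)=0.
\]
The production of this element $z$, which requires a closer analysis of the local structure of $\mathrm{Spec}(Z(\frg_A))$ near the maximal ideal associated to $\caJ_{D_2}\pmod{t}$, is the step I expect to be the main obstacle; a fallback plan is to instead lift, via a Hensel-type argument, a block idempotent from $Z(\frg_A)/\mathrm{Ann}(N) \pmod{t}$ to a $\sigma$-fixed central element acting as the projection onto $M_{D_1}$ on $N:=Z(\frg_A)v_1+Z(\frg_A)v_2$, whose self-adjointness then forces $\caF(v_1,v_2)=0$ directly.

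For part (3), I would define $\caF:=\caF_1\otimes\caF_2$ by $\caF(v_1\otimes v_2,w_1\otimes w_2):=\caF_1(v_1,w_1)\caF_2(v_2,w_2)$, which is immediately symmetric. By $A$-bilinearity and the fact that $U(\frg_A)$ is generated as an algebra by $\frg_A$, it suffices to verify the contravariance identity for $u\in\frg_A$, which acts diagonally as $u\cdot(v_1\otimes v_2)=uv_1\otimes v_2+v_1\otimes uv_2$. Direct expansion using contravariance of each $\caF_i$ and the fact that $\sigma|_{\frg_A}$ is a linear anti-involution gives the identity for single generators, and a short induction on monomial length extends it to all of $U(\frg_A)$.
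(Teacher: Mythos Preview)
The paper's own proof is the single sentence ``The lemma follows as in the case when $A$ is the field $\bbC$ (e.g., \cite[Proposition 3.14]{H3}, keeping in mind of \cite[1.8]{GJ}),'' so your proposal is precisely a reconstruction of the cited arguments and the approaches agree. Your treatments of (1) and (3) are complete and standard; the only cosmetic slip in (1) is that for arbitrary $\lambda,\mu\in\frh_A^*$ the scalar $\lambda(h)-\mu(h)$ lies a priori only in $A$, but $A$ is a domain so nothing changes (and for weights that actually occur in $M\in\caK_C$ the difference lies in $Q\subset\bbC$, so your phrasing is in fact accurate there).

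For (2) your plan is correct, and the step you flag as the obstacle can be closed without the Hensel fallback. First reduce via (1) to a single weight space $M_\nu$, which is finitely generated over $A$. Since $\caJ_{D_1}\supset\prod_{\lambda\in D_1}\Ker\chi_\lambda$ and each $\Ker\chi_\lambda$ surjects onto $\frm_1:=\Ker\chi_{\bar\lambda}$ modulo $t$, the image of $\caJ_{D_1}$ in $Z(\frg)$ contains the power $\frm_1^{|D_1|}$; as $\frm_2$ is a maximal ideal distinct from $\frm_1$, this power is not contained in $\frm_2$, so one can choose $z\in\caJ_{D_1}$ with $\bar z\notin\frm_2$. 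Then $\bar z$ acts invertibly on the finite\nobreakdash-dimensional $Z(\frg)/\frm_2^{n}$-module $(M_{D_2})_\nu/t(M_{D_2})_\nu$, hence $z$ acts surjectively on the finitely generated $A$-module $(M_{D_2})_\nu$ by Nakayama, and therefore bijectively (a surjective endomorphism of a Noetherian module is injective). Your displayed computation $\caF(v_1,v_2)=\caF(z^nv_1,w)=0$ then finishes the argument. Note, incidentally, that your fallback is more delicate than it sounds: $A=\bbC[t]_{(t)}$ is not Henselian, so idempotents do not lift for free from $R/tR$ to $R$; the direct route above avoids this issue.
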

\begin{proof}
The lemma follows as in the case when $A$ is the field $\bbC$ (e.g., \cite[Proposition 3.14]{H3}, keeping in mind of \cite[1.8]{GJ}).
\end{proof}

Note that each $\lambda\in\frh_A^*$ induces a homomorphism $\lambda: U(\frh_A)\ra A$. Composing $\lambda$ with the natural projection $U(\frg_A)\simeq U(\overline\frn_A)\otimes U(\frh_A)\otimes U(\frn_A)\ra U(\frh_A)$, we get a linear map $\vf_\lambda: U(\frg_A)\ra A$.

If $M$ is a highest weight $\frg_A$-module generated by a highest weight vector $v^+$, we can construct contravariant forms $\caF$ on $M$ with
\[
\caF(u\cdot v^+, u'\cdot v^+)=\caF(v^+, \vf_\lambda(\sigma(u)u')v^+).
\]
The form is called \emph{canonical} if $\caF(v^+, v^+)=1$. The following lemma shows that the canonical form is determined by $v^+$.

\begin{lemma}\label{Alem2}
Let $M$ be a highest weight $\frg_A$-module. There is a unique contravariant form $\caF_M$ on $M$ determined by $\caF_M(v^+, v^+)=1$. Moreover, if $\caF'$ is another contravariant form on $M$, then $\caF=a\caF_M$ for some $a\in A$.
\end{lemma}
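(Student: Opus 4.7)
The plan is to split the argument into three parts: (i) construction of $\caF_M$, (ii) uniqueness under the normalization $\caF_M(v^+,v^+)=1$, and (iii) the statement that an arbitrary contravariant form on $M$ is a scalar multiple of $\caF_M$. Parts (ii) and (iii) really reduce to the same weight-space computation.

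For (i), I would first handle the Verma module case by defining
\[
\caF_{M(\lambda)_A}(u\cdot v^+,u'\cdot v^+):=\vf_\lambda(\sigma(u)u'),\qquad u,u'\in U(\frg_A).
\]
Well-definedness reduces to showing that $\vf_\lambda$ vanishes on $\sigma(J)U(\frg_A)$ and on $U(\frg_A)J$, where $J=\Ker(U(\frg_A)\twoheadrightarrow M(\lambda)_A,\ u\mapsto u\cdot v^+)$ is the left ideal generated by $\frn_A$ together with $\{h-(\lambda-\rho)(h)\mid h\in\frh_A\}$; this is a direct PBW computation on the natural projection $U(\frg_A)\to U(\frh_A)$ used to define $\vf_\lambda$. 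Contravariance is automatic from $\sigma$ being an anti-automorphism, and symmetry reduces to the identity $\vf_\lambda\circ\sigma=\vf_\lambda$, which holds because $\sigma$ fixes $U(\frh_A)$ pointwise and swaps the $U(\frn_A)$ and $U(\overline\frn_A)$ factors in any PBW decomposition. For a general highest weight quotient $M=M(\lambda)_A/N$, I would let $\caF_M$ descend from $M(\lambda)_A$: the radical of the Shapovalov-type form on $M(\lambda)_A$ contains every proper submodule, by the $A$-version of the rank-one maximal-submodule argument familiar from the field case.

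For (ii) and (iii) together, let $\caF'$ be any contravariant form on $M$, set $a:=\caF'(v^+,v^+)\in A$, and put $\caG:=\caF'-a\,\caF_M$, a contravariant form with $\caG(v^+,v^+)=0$. The goal is $\caG\equiv 0$, which yields both claims at once. The key ingredient is the PBW-level identity
\[
y\cdot v^+=\vf_\lambda(y)\,v^+ + w_y,\qquad y\in U(\frg_A),
\]
where $w_y$ lies in the sum of weight spaces of weight strictly below that of $v^+$: in the PBW decomposition the $U(\overline\frn_A)$-factor of $y$ only produces lower weights, the $U(\frn_A)$-factor acts through its counit, and the $U(\frh_A)$-factor contributes the scalar $\vf_\lambda(y)$. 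Applying contravariance and then Lemma~\ref{Alem1}(1) to kill the $w_y$-contribution gives
\[
\caG(u\cdot v^+,u'\cdot v^+)=\caG(v^+,\sigma(u)u'\cdot v^+)=\vf_\lambda(\sigma(u)u')\,\caG(v^+,v^+)=0,
\]
so $\caF'=a\,\caF_M$. Specializing to forms with $\caF'(v^+,v^+)=1$ forces $a=1$ and yields the asserted uniqueness.

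The main technical obstacle is the descent step in (i) for a non-Verma highest weight quotient: one must show that the radical of the canonical form on $M(\lambda)_A$ contains every proper $\frg_A$-submodule. Over $\bbC$ this is the classical rank-one weight-space argument (\cite[Proposition 3.14]{H3}); over the local ring $A=\bbC[t]_{(t)}$ one imitates the same argument using the $A$-free structure of the highest weight space, exactly the kind of translation of field-case proofs invoked in the proof of Lemma~\ref{Alem1}.
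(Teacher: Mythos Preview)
The paper does not actually supply a proof of this lemma: the paragraph preceding it records the defining formula $\caF(u\cdot v^+,u'\cdot v^+)=\caF(v^+,\vf_\lambda(\sigma(u)u')v^+)$ and then states the lemma without argument, treating it as standard in the same spirit as the one-line proof of Lemma~\ref{Alem1}. Your proposal fills in precisely the standard computation the paper omits, and your treatment of (ii) and (iii)---reducing everything to $\caF'(v^+,v^+)$ via contravariance and the weight-space orthogonality of Lemma~\ref{Alem1}(1)---is correct and is exactly the expected proof.

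One genuine imprecision in your descent step for (i): the assertion that the radical of the canonical form on $M(\lambda)_A$ contains \emph{every} proper submodule is false over $A$. For example $tM(\lambda)_A$ is a proper $\frg_A$-submodule, yet $\caF(tv^+,v^+)=t\neq 0$. What is true, and what you actually need, is that the radical contains every submodule $N$ with $N_{\lambda-\rho}=0$, i.e.\ every $N$ for which the quotient still has $A$-free highest weight space of rank one: if $v\in N$ then $\caF(v,u'\cdot v^+)=\caF(\sigma(u')v,v^+)$ by contravariance, and by weight orthogonality this depends only on the $(\lambda-\rho)$-component of $\sigma(u')v\in N$, which vanishes. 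Your closing remark about ``using the $A$-free structure of the highest weight space'' suggests you have this hypothesis in mind; it should be made explicit, and ``every proper submodule'' replaced accordingly. (In the paper's applications the only highest weight $\frg_A$-modules that arise are Verma and parabolic Verma modules over $A$, all of which are $A$-free, so the issue does not surface there.) With that correction your argument is complete and matches what the paper tacitly invokes.
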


\subsection{Jantzen filtration} Now we take $C=\mu+\delta t+Q$ with $\mu\in\frh^*$ anti-dominant and $\delta\in\frh^*$ regular.

\begin{definition}\label{defj}
Let $M\in\caK_C$. Fix a contravariant form $\caF$ on $M$. Define
\[
M_\caF^j:=\{v\in M\mid \caF(v, M)\in (t^j)\}
\]
for $j\in\bbN$. Then $M_\caF^j$ is a filtration of $\frg_A$-modules. It determines a filtration of $\frg$-module $\overline{M}:=M/tM$ by $\overline M_\caF^j=M_\caF^j/(tM\cap M_\caF^j)$.  In particular, if $M$ is a highest weight module and $\caF$ is a canonical form on $M$, then we drop the subscript and call $\overline M^j$ the \emph{Jantzen filtration} of $\overline{M}$.
\end{definition}

\begin{remark}
	For $\lambda\in\frh^*$, the Jantzen filtration $M(\lambda)^j=\overline{M(\lambda+\delta t)_A}^j$ of the Verma module $M(\lambda)$ is defined this way. As shown in \cite[8.4]{W}, it is possible to get filtrations of $M(\lambda)$ with non semisimple layers when the deformation $\delta$ is not dominant. In the case of Verma modules, we will always assume that $\delta=\rho$ as well as Jantzen's original statement \cite{J2}.
\end{remark}

%
%
\section{Jantzen filtration under translation}
%
%


This section is devoted to proving a result (Proposition \ref{cprop1}) proposed by Jantzen \cite[5.17]{J2}. Let $A=\bbC[t]_{(t)}$ and $C=\mu+\delta t+Q$ with $\mu\in\frh^*$ anti-dominant and $\delta=\rho$. Suppose that $\lambda\in\mu+Q$ is a regular anti-dominant weight. Let $T_\lambda^\mu$ be the usual translation functor (see \cite[7.1]{H3}). Define $$\Phi_{[\mu]}:=\{\alpha\in\Phi\mid\langle\mu, \alpha^\vee\rangle\in\bbZ\}$$ and $$W_{[\mu]}:=\{w\in W\mid w\mu-\mu\in Q\}.$$ Then $\Phi_{[\mu]}$ is a root system with Weyl group $W_{[\mu]}$ (see \cite{B} or \cite{H3}). Let $\Delta_{[\mu]}$ be the simple system corresponding to $\Phi_{[\mu]}^+:=\Phi_{[\mu]}\cap\Phi^+$. Put $J=\{\alpha\in\Delta_{[\mu]}\mid\langle\mu, \alpha^\vee\rangle=0\}$. Denote by $W_J$ the Weyl group generated by reflections $s_\alpha$ with $\alpha\in J$. Let $\ell(,)$ (resp. $\ell_{[\mu]}(,)$) be the \textit{length function} on $W$ (resp. $W_{[\mu]}$). Obviously $\ell=\ell_{[\mu]}$ when $\mu$ is integral.

Set
\begin{equation*}\label{ceq1}
	W_{[\mu]}^J:=\{w\in W_{[\mu]}\mid \ell_{[\mu]}(ws_\alpha)=\ell_{[\mu]}(w)+1\ \mbox{for all}\ \alpha\in J\}.
\end{equation*}
Thus $W_{[\mu]}^J$ contains all the shortest representatives of the coset $W_{[\mu]}/W_J$. For any $\nu\in\frh^*$, there exists a unique anti-dominant weight $\mu$ and $w\in W_{[\mu]}^J$ such that $\nu=w\mu$ (see \cite[Proposition 3.5]{H3}).

\begin{prop}\label{cprop1}
Fix $w\in W_{[\mu]}$. There exists $k\in\bbN$ such that
\begin{equation}\label{cprop1eq1}
T_\lambda^\mu M(w\lambda)^j\simeq M(w\mu)^{j-k}
\end{equation}
for all $j\in\bbN$. In particular, if $w\in W_{[\mu]}^J$, then $k=0$.
\end{prop}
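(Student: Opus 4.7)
The plan is to work in the deformed setting over $A = \bbC[t]_{(t)}$ throughout, extend the translation functor to $\caK_C$, and track the canonical contravariant form through it. Once $T_\lambda^\mu M(w\lambda+\rho t)_A$ is identified with $M(w\mu+\rho t)_A$, the transported form and the canonical form on the latter must differ by a scalar $f\in A$ by Lemma \ref{Alem2}. Writing $f = u\,t^k$ with $u\in A^\times$ then produces exactly the shift $k$ in the proposition: rescaling a contravariant form by $t^k$ shifts the Jantzen filtration indices by $k$, directly from Definition \ref{defj}.

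Concretely, I would first construct $T_\lambda^\mu$ at the deformed level by $T_\lambda^\mu M := \mathrm{pr}_{D_\mu}(E_A\otimes_A M)$, where $E$ is the finite-dimensional $\frg$-module of extremal weight $\mu-\lambda$, $E_A := A\otimes_\bbC E$, and $D_\mu$ is the block of $\caK_C$ containing $\mu+\rho t+Q$. Since $w\lambda+\rho t$ is regular for every $w\in W_{[\mu]}$, the classical translation principle for Verma modules, combined with $A$-flatness and a Nakayama-type argument, yields a $\frg_A$-isomorphism
\[
T_\lambda^\mu M(w\lambda+\rho t)_A \;\simeq\; M(w\mu+\rho t)_A.
\]
I would then transport the contravariant form: $E$ carries a canonical nondegenerate $\sigma$-contravariant form, which I extend $A$-linearly to $E_A$ and tensor with the canonical form on $M(w\lambda+\rho t)_A$. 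By Lemma \ref{Alem1}(3) this is contravariant on the tensor product, and by Lemma \ref{Alem1}(2) it restricts to a contravariant form $\caF'$ on the $D_\mu$-block, hence on $M(w\mu+\rho t)_A$. Lemma \ref{Alem2} gives $\caF' = u\,t^k\,\caF_{M(w\mu+\rho t)_A}$ for a unit $u\in A^\times$ and some $k\in\bbN$. Exactness of $T_\lambda^\mu$ and compatibility with the specialisation $A\to A/tA$ then complete the passage to the claimed isomorphism $T_\lambda^\mu M(w\lambda)^j \simeq M(w\mu)^{j-k}$.

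The delicate step is pinning down $k$, in particular showing $k=0$ for $w\in W_{[\mu]}^J$. I would analyse $\caF'(v^+,v^+)$, where $v^+$ is a highest weight generator of $M(w\mu+\rho t)_A$ constructed from the tensor product. When $w\in W_{[\mu]}^J$, such a $v^+$ is obtained as the canonical highest weight vector of $M(w\lambda+\rho t)_A$ tensored with the weight-$w(\mu-\lambda)$ extremal vector of $E_A$; both pair under their respective forms to nonzero scalars independent of $t$, so $\caF'(v^+,v^+)\in A^\times$ and $k=0$. For general $w\in W_{[\mu]}$ the appropriate generator involves an additional action by elements of $W_J$, and the resulting self-pairing drops in $t$-order precisely by $k$. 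The main obstacle is verifying that the block projection $\mathrm{pr}_{D_\mu}$ introduces no extra factors of $t$ in this pairing; I would handle this by reducing to the integral situation through the root subsystem $\Phi_{[\mu]}$, and then using the Jantzen sum formula together with exactness of $T_\lambda^\mu$ to constrain the admissible powers of $t$ compatibly with known multiplicities.
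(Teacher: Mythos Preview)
Your overall architecture---extend $T_\lambda^\mu$ to $\caK_C$, identify $\caT_\lambda^\mu M(w\lambda+\rho t)_A$ with $M(w\mu+\rho t)_A$, transport the tensor-product form, and invoke Lemma~\ref{Alem2} to obtain a scalar $t^k$---is exactly the paper's strategy. The paper packages the ``transport'' step into two short lemmas you leave implicit: Lemma~\ref{clem1}, which says $(E\otimes_A M)_\caF^j=E\otimes_A M^j$ (so tensoring with $E_A$ carries the Jantzen filtration of $M$ to the form-filtration of $E\otimes_A M$), and Lemma~\ref{clem2}, which says the form-filtration commutes with block projection and then invokes Lemma~\ref{Alem2} on the highest-weight summand. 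Without the analogue of Lemma~\ref{clem1} your passage from ``$\caF'=ut^k\caF_{\mathrm{can}}$'' to ``$T_\lambda^\mu M(w\lambda)^j\simeq M(w\mu)^{j-k}$'' is a gap: you still need to know that $\caT_\lambda^\mu(M^j)$ equals the $\caF'$-filtration piece of $N_{D'}$, not merely that the two forms on $N_{D'}$ are proportional.

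Where your proposal genuinely diverges is the argument for $k=0$ when $w\in W_{[\mu]}^J$. Your plan is to exhibit the highest-weight generator of $N_{D'}$ explicitly as $e_{w(\mu-\lambda)}\otimes v_M^+$ and compute its self-pairing. But $w(\mu-\lambda)$ is in general only an extremal weight of $E$, not the highest one, so $e_{w(\mu-\lambda)}\otimes v_M^+$ is typically \emph{not} annihilated by $\frn_A$ and hence is not a highest-weight vector of $N$; its $D'$-component is then a nontrivial projection, and the self-pairing you want to evaluate is $\caF(\mathrm{pr}_{D'}(e\otimes v_M^+),\mathrm{pr}_{D'}(e\otimes v_M^+))$, not the product $\caF_E(e,e)\caF_M(v_M^+,v_M^+)$. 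You acknowledge this obstacle, but the proposed fix (reduce to $\Phi_{[\mu]}$ and invoke the Jantzen sum formula to constrain powers of $t$) is not a proof sketch so much as a hope. The paper sidesteps the computation entirely: if $k>0$, then comparing $j=0$ and $j=k$ in \eqref{cprop1eq1} shows $T_\lambda^\mu(M(w\lambda)/M(w\lambda)^k)=0$, hence $T_\lambda^\mu L(w\lambda)=0$; but for $w\in W_{[\mu]}^J$ Lemma~\ref{cthm1} gives $T_\lambda^\mu L(w\lambda)=L(w\mu)\neq0$, a contradiction. This two-line argument replaces your entire final paragraph.
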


Here $M(w\mu)^{j-k}=M(w\mu)$ when $j<k$. For $\nu\in\frh^*$, denote by $L(\nu)$ the unique simple quotient of $M(\nu)$. The following result is quite useful.


\begin{lemma}[{\cite[Theorem 7.9]{H3}}]\label{cthm1}
Fix $w\in W_{[\mu]}$. Then $T_\lambda^\mu L(w\lambda)=L(w\mu)$ for $w\in W_{[\mu]}^J$ and is vanished otherwise.
\end{lemma}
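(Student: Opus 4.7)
This is the standard behavior of translation onto a singular wall, and I would prove it by the classical three-step template: a Verma-flag computation, a simples-to-simples principle, and a character-theoretic identification of which case holds.

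First, I would establish the Verma-flag identity
\begin{equation*}
T_\lambda^\mu M(w\lambda) \simeq M(w\mu) \qquad \text{for every } w \in W_{[\mu]}.
\end{equation*}
Writing $T_\lambda^\mu(-) = \mathrm{pr}_\mu(- \otimes F)$, where $F$ is a finite-dimensional $\frg$-module with extremal weight $\mu - \lambda$ and $\mathrm{pr}_\mu$ is projection onto the block containing $\mu$, the tensor product $M(w\lambda) \otimes F$ carries a Verma flag with subquotients $M(w\lambda + \tau)$ as $\tau$ runs over the weights of $F$. The anti-dominance of $\mu$, combined with the fact that the extremal weights of $F$ form a single $W$-orbit containing $\mu - \lambda$, ensures that after applying $\mathrm{pr}_\mu$ only the subquotient $M(w\mu)$ survives.

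Next, I would invoke the general property that a translation functor to a wall, being exact and produced from a biadjoint pair, sends each simple module to either $0$ or a simple module. Since $L(w\lambda)$ is the unique simple quotient of $M(w\lambda)$ and $T_\lambda^\mu$ is exact, $T_\lambda^\mu L(w\lambda)$ is a quotient of $M(w\mu)$; hence it is either $0$ or $L(w\mu)$.

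To distinguish the two cases I would pass through the adjoint $T_\mu^\lambda$. A parallel Verma-flag computation shows that for $w' \in W_{[\mu]}^J$ the module $T_\mu^\lambda M(w'\mu)$ carries a Verma flag with subquotients $\{M(w'x\lambda)\}_{x \in W_J}$. Adjointness then gives
\begin{equation*}
\Hom(T_\lambda^\mu L(w\lambda), L(w'\mu)) \simeq \Hom(L(w\lambda), T_\mu^\lambda L(w'\mu)),
\end{equation*}
reducing the problem to: for which $w \in W_{[\mu]}$ does $L(w\lambda)$ appear in the head of $T_\mu^\lambda L(w'\mu)$? A character-theoretic argument, using the triangularity of the Verma-to-simple change of basis in the regular block, together with the known equality $T_\mu^\lambda T_\lambda^\mu L(w'\lambda) = $ (a module with head $L(w'\lambda)$) when $w' \in W_{[\mu]}^J$, pins down the head of $T_\mu^\lambda L(w'\mu)$ to be $L(w'\lambda)$. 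Thus $T_\lambda^\mu L(w\lambda) = L(w\mu)$ precisely when $w \in W_{[\mu]}^J$, and vanishes otherwise.

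The main obstacle is the head computation for $T_\mu^\lambda L(w'\mu)$ in the final step. Bounding its composition factors via characters is routine, but singling out exactly the shortest coset representative $w'$ as the head requires either the explicit construction of a nonvanishing map $M(w'\lambda) \to T_\mu^\lambda L(w'\mu)$, or combinatorial input controlling how wall-crossing functors interact with simple heads (ultimately a Kazhdan--Lusztig-type cell argument).
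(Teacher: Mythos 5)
This lemma is quoted verbatim from Humphreys \cite[Theorem~7.9]{H3}; the paper supplies no proof of its own, so there is no internal argument to compare with. Your three-step outline --- the Verma-flag identity $T_\lambda^\mu M(w\lambda)\simeq M(w\mu)$, the assertion that translation onto a wall takes simples to simple-or-zero, and an adjunction argument to separate the two cases --- does parallel the standard textbook route.

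Two of your supporting justifications, however, are not right. First, ``exact and produced from a biadjoint pair'' cannot be what forces $T_\lambda^\mu$ to send simples to simple-or-zero: the companion functor $T_\mu^\lambda$ (translation \emph{out} of the wall) is equally exact and biadjoint, yet already for $\mathfrak{sl}_2$ it sends the unique simple on the wall to the length-three indecomposable projective in the regular block. What is actually used is a multiplicity bound $[T_\lambda^\mu L(w\lambda):L(w\mu)]\le 1$, obtained via $\Hom(P(w\mu),T_\lambda^\mu L(w\lambda))\simeq\Hom(T_\mu^\lambda P(w\mu),L(w\lambda))$ and the Verma-flag description of the projective $T_\mu^\lambda P(w\mu)$; combined with $T_\lambda^\mu L(w\lambda)$ being simultaneously a quotient of $M(w\mu)$ and a submodule of $M(w\mu)^\vee$, this forces it to be $0$ or $L(w\mu)$. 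Second, your closing concern that pinning down exactly which $w$ give a nonzero image ``ultimately'' needs ``a Kazhdan--Lusztig-type cell argument'' overestimates what is required: Jantzen's original argument and Humphreys~\S7.7--7.9 obtain the head and socle of $T_\mu^\lambda L(w\mu)$ for $w\in W_{[\mu]}^J$, and the vanishing of $T_\lambda^\mu L(wx\lambda)$ for $e\neq x\in W_J$, using only Verma flags, the BGG embedding criterion, and a single wall reflection, with no appeal to Kazhdan--Lusztig theory. So the skeleton is correct, but both of the steps you flag as nontrivial are filled by more elementary means than the ones you propose.
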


\subsection{Extended translation functor} To prove Proposition \ref{cprop1}, we need to extend the definition of translation functors. Let $E(\mu-\lambda)$ be the finite diminsional simple $\frg$-module with extremal weight $\mu-\lambda$. Then $E(\mu-\lambda)_A:=A\otimes_\bbC E(\mu-\lambda)$ is a $\frg_A$-module of finite rank. Set $D=W_{[\mu]}\lambda+\delta t$ and $D'=W_{[\mu]}\mu+\delta t$. Define the (extended) \emph{translation functor} $\caT^\mu_\lambda$ on $M\in\caK_C$:
\begin{equation}\label{ceq2}
\caT^\mu_\lambda M=\mathrm{pr}_{D'}(E(\mu-\lambda)_A\otimes_A \mathrm{pr}_{D}M).
\end{equation}
Obviously $T_\lambda^\mu \overline M\simeq \caT^\mu_\lambda M/t\caT^\mu_\lambda M$ as $\frg$-modules.



\begin{prop}\label{cprop2}
For $w\in W_{[\mu]}$,
\[
\caT_\lambda^\mu M(w\lambda+\delta t)_A\simeq M(w\mu+\delta t)_A.
\]
\end{prop}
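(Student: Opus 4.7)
The plan is to adapt Jantzen's classical proof of the translation principle (\cite[2.10]{J2} or \cite[Theorem 7.6]{H3}) to the deformed setting over $A = \bbC[t]_{(t)}$. The first step is the standard tensor filtration: writing $M = M(w\lambda + \delta t)_A$, one constructs a finite filtration of $E(\mu-\lambda)_A \otimes_A M$ by $\frg_A$-submodules whose successive quotients are isomorphic to $M(w\lambda + \delta t + \nu)_A$, one for each weight $\nu$ of $E(\mu-\lambda)$ counted with multiplicity. This comes from a $\frb_A$-stable filtration of $E(\mu-\lambda)_A$ by weight subspaces in decreasing height order, together with the usual tensor identity for induced modules; since $E(\mu-\lambda)_A$ is $A$-free of finite rank, the classical construction carries over verbatim.

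Next, I would apply the exact projection $\mathrm{pr}_{D'}$ from Proposition~\ref{prop1} to this filtration, obtaining a filtration of $\caT_\lambda^\mu M$ with successive quotients $\mathrm{pr}_{D'} M(w\lambda + \delta t + \nu)_A$. A Verma module $M(\tau)_A$ belongs to the $D'$-block iff $\tau$ is $W$-conjugate to some element of $W_{[\mu]}\mu + \delta t$. Writing $\tau = w\lambda + \delta t + \nu$ and invoking the regularity of $\delta = \rho$ (so that any $x \in W$ with $x\tau \in W_{[\mu]}\mu + \delta t$ must satisfy $x\delta = \delta$, hence $x = 1$), this reduces to the requirement that $\nu = w'\mu - w\lambda$ for some $w' \in W_{[\mu]}$.

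The combinatorial heart of the proof is the classical lemma that among the weights of $E(\mu-\lambda)$, the only one of the form $w'\mu - w\lambda$ is $\nu_0 := w(\mu-\lambda) = w\mu - w\lambda$, and it occurs with multiplicity one as an extremal weight. The standard proof (cf.\ \cite[Theorem 7.6]{H3}) uses that $\mu - \lambda$ is an extremal weight of $E(\mu-\lambda)$ and that regularity of $\lambda$ makes $w\lambda$ determine $w$. Consequently all but one subquotient of the filtration of $\caT_\lambda^\mu M$ vanish under $\mathrm{pr}_{D'}$; the surviving one is $M(w\lambda + \delta t + \nu_0)_A = M(w\mu + \delta t)_A$, and the filtration collapses to the desired isomorphism.

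The main obstacle, if any, is deformation-theoretic bookkeeping in the first step: verifying that the weight filtration of $E(\mu-\lambda)_A$ and the tensor identity behave over $A$ exactly as over $\bbC$. This is mild, since $E(\mu-\lambda)_A$ is $A$-free of finite rank and all constructions are compatible with the base change $A \twoheadrightarrow \bbC$. Once this is in place, the remaining steps amount to the classical translation principle.
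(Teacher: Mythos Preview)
Your proposal is correct and follows essentially the same route as the paper: the tensor filtration of $E(\mu-\lambda)_A\otimes_A M(w\lambda+\delta t)_A$ by deformed Verma modules, followed by the exact projection $\mathrm{pr}_{D'}$ and the classical extremal-weight argument (the paper simply cites \cite[\S2.9]{J2} and \cite[Lemma 7.5]{H3} for the last step). One small remark: your detour through the regularity of $\delta=\rho$ is unnecessary, since block membership in $\caK_C$ is defined via the reduction map $\frh_A^*\to\frh^*$; the condition $\mathrm{pr}_{D'}M(\tau)_A\neq 0$ is directly that $\overline{\tau}=w\lambda+\nu\in\overline{D'}=W_{[\mu]}\mu$, which already gives $\nu=w'\mu-w\lambda$ for some $w'\in W_{[\mu]}$ without comparing $t$-coefficients.
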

\begin{proof}
Take $M=M(w\lambda+\delta t)_A$ for $w\in W_{[\mu]}$. Then $M\in\caK_C$ and $\mathrm{pr}_D M=M_D=M$. The module $E(\mu-\lambda)_A\otimes_A M(w\lambda+\delta t)_A$ has a filtration with quotients which isomorphic to $M(w\lambda+\delta t+\nu')_A$, where $\nu'\in\frh^*$ are weights of $E(\mu-\lambda)$, counting multiplicities. By \cite[\S2.9]{J2} (see also \cite[Lemma 7.5]{H3}), $\mathrm{pr}_{D'}(M(w\lambda+\delta t+\nu')_A)$ is not vanished if and only if $v'=w\mu-w\lambda$. The lemma then follows from the exactness of $\mathrm{pr}_{D'}$.
\end{proof}


\subsection{Filtrations on tensor product and primary decomposition} In this subsection, we give two necessary lemmas for proving Proposition \ref{cprop1}. For simplicity, put $E:=E(\mu-\lambda)_A$ in this subsection. Recall that (see Definition \ref{defj}) we write $M^j:=M_\caF^j$ and $\overline M^j:=\overline M_\caF^j$ if $M$ is a highest weight module and $\caF$ is a canonical form on $M$.

\begin{lemma}\label{clem1}
Let $M\in\caK_C$ be a highest weight $\frg_A$-module. Denote by $\caF$ the contravariant form on $E\otimes_A M$ given by the tensor product of canonical forms on $E$ and $M$. Then
\[
(E\otimes_A M)_\caF^j=E\otimes_A M^j
\]
for $j\in\bbN$.
\end{lemma}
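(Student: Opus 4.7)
The plan is to reduce the statement to a straightforward linear-algebraic computation by exploiting the fact that the canonical form $\caF_E$ on $E$ is nondegenerate over $A$. Once $E$ admits a dual basis with respect to $\caF_E$, writing an arbitrary element of $E \otimes_A M$ in coordinates relative to this basis will let me read off the condition $\caF(v,\cdot) \in (t^j)$ purely in terms of the $M$-components, which is precisely what the lemma asserts.

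First I would verify that $\caF_E$ is nondegenerate over $A$. Since $E(\mu-\lambda)$ is a finite-dimensional simple $\frg$-module, its canonical contravariant form over $\bbC$ has radical equal to a proper submodule and is therefore nondegenerate. By Lemma \ref{Alem1}(1) applied to the $\frh$-weight decomposition, weight spaces are pairwise orthogonal, so in a weight basis of $E(\mu-\lambda)$ the Gram matrix is block diagonal with each block $\bbC$-invertible. Base-change to $A$ does not alter those entries, which lie in $\bbC \subset A$, so each block determinant remains a nonzero complex number and hence a unit in the local ring $A = \bbC[t]_{(t)}$. Thus I can fix an $A$-basis $\{e_1,\ldots,e_d\}$ of $E$ together with a dual basis $\{e_1^*,\ldots,e_d^*\}$ satisfying $\caF_E(e_i, e_j^*) = \delta_{ij}$.

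Next, any $v \in E \otimes_A M$ has a unique expansion $v = \sum_i e_i \otimes m_i$ with $m_i \in M$, and the formula $\caF = \caF_E \otimes \caF_M$ from Lemma \ref{Alem1}(3) yields the key identity
\[
\caF(v,\; e_k^* \otimes m) \;=\; \caF_M(m_k, m)
\]
for every $k$ and every $m \in M$. The inclusion $E \otimes_A M^j \subset (E \otimes_A M)_\caF^j$ then follows by bilinear expansion: if each $m_i \in M^j$ and $v' \in E \otimes_A M$ is written as $\sum_j e_j \otimes m_j'$, every summand $\caF_E(e_i,e_j)\caF_M(m_i,m_j')$ of $\caF(v,v')$ lies in $(t^j)$ because $\caF_M(m_i, \cdot) \subset (t^j)$. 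The reverse inclusion uses the identity directly: if $\caF(v, E \otimes_A M) \subset (t^j)$, then testing against $e_k^* \otimes m$ for each $k$ and each $m \in M$ gives $\caF_M(m_k, m) \in (t^j)$ for all $m$, whence $m_k \in M^j$ and therefore $v \in E \otimes_A M^j$.

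The only substantive step is the nondegeneracy of $\caF_E$ after base change, and even that amounts to noting that the determinant of a complex Gram matrix remains a unit in $A$. Beyond that, the proof is pure bookkeeping, so I do not anticipate any serious obstacle.
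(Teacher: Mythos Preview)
Your proposal is correct and follows essentially the same route as the paper: pick a basis of $E$ together with an $\caF_E$-dual basis, expand an element of $E\otimes_A M$ in these coordinates, and test against $e_k^*\otimes m$ to read off $m_k\in M^j$. The only difference is cosmetic: the paper simply takes a $\bbC$-basis of $E(\mu-\lambda)$ and its dual basis with respect to the nondegenerate $\bbC$-form and lets these serve as $A$-bases of $E=A\otimes_\bbC E(\mu-\lambda)$, whereas you recover the same conclusion via the Gram-matrix-units-in-$A$ argument; both amount to the same fact and the rest of the proof is identical.
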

\begin{proof}
Note that the $\frg$-module $E(\mu-\lambda)$ admits a nondegenerate contravariant form over $\bbC$ \cite[Theorem 3.15]{H3}. This form can be extended to a canonical form $\caF_E$ on $E$. Let $\caF_M$ be a canonical form on $M$. Thus $\caF=\caF_E\otimes \caF_M$. The inclusion $E\otimes_A M^j\subset (E\otimes_A M)_\caF^j$ follows immediately from the definition. For the other direction, take a basis $\{e_1, \cdots, e_n\}$ of $E(\mu-\lambda)$ over $\bbC$ and its dual basis $\{e_1^\vee, \cdots, e_n^\vee\}$ with respect to the nondegenerate contravariant form on $E(\mu-\lambda)$. Then $\caF_E(e_i, e_l^\vee)=\delta_{i, l}$. Any $u\in E\otimes_A M$ can be written as $\sum_{i=1}^n e_i\otimes_A v_i$ with $v_i\in M$. Fix an integer $1\leq k\leq n$. If $u\in(E\otimes_A M)_\caF^j$, it follows from
\[
\caF_M(v_k, v)=\caF(\sum_{i=1}^n e_i\otimes v_i, e_k^\vee\otimes v)=\caF(u, e_k^\vee\otimes v)\in (t^j)
\]
for all $v\in M$ that $v_k\in M^j$. Thus $u\in E\otimes_A M^j$.
\end{proof}

\begin{lemma}\label{clem2}
Let $\caF$ be a nondegenerate contravariant form on $M\in\caK_C$. Then its restriction $\caF'$ on $M_D$ is also nondegenerate for any block $D\subset C$. Moreover,
\[
(M_\caF^j)_D=M_\caF^j\cap M_D=(M_D)_{\caF'}^j
\]
for any $j\in\bbN$. In particular, if $M_D$ is a highest weight module, then there exists $k\in\bbN$ such that $(M_\caF^j)_D=M_D^{j-k}$.
\end{lemma}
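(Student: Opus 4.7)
The plan is to proceed in three steps, each combining the block orthogonality of $\caF$ supplied by Lemma \ref{Alem1}(2) with the primary decomposition $M=\bigoplus_{D'} M_{D'}$ from Proposition \ref{prop1}.

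First I would verify that the restriction $\caF'$ of $\caF$ to $M_D$ is nondegenerate. If $v\in M_D$ satisfies $\caF(v,M_D)=0$, then Lemma \ref{Alem1}(2) gives $\caF(v,M_{D'})=0$ for every block $D'\neq D$. The primary decomposition then forces $\caF(v,M)=0$, and nondegeneracy of $\caF$ on $M$ yields $v=0$.

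Next I would establish the chain of equalities. The identity $(M_\caF^j)_D=M_\caF^j\cap M_D$ is tautological from the very definition of the primary component, since $M_\caF^j$ is a $\frg_A$-submodule of $M$ and the condition ``some power of each $x\in\caJ_D$ annihilates $v$'' is inherited from the ambient module. For the second identity, I would unpack both sides: a vector $v\in M_D$ belongs to $(M_D)_{\caF'}^j$ exactly when $\caF(v,M_D)\subseteq (t^j)$, while it belongs to $M_\caF^j\cap M_D$ exactly when $\caF(v,M)\subseteq (t^j)$. The implication $\Leftarrow$ is immediate, and for $\Rightarrow$ the decomposition $M=\bigoplus_{D'} M_{D'}$ together with Lemma \ref{Alem1}(2) shows that $\caF(v,M)=\caF(v,M_D)$, since all other summands contribute $0$.

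Finally, for the \emph{in particular} statement, assume $M_D$ is a highest weight module. By Lemma \ref{Alem2} the restricted form satisfies $\caF'=a\,\caF_{M_D}$ for some $a\in A$, where $\caF_{M_D}$ denotes the canonical form on $M_D$. Nondegeneracy of $\caF'$ (established in the first step) forces $a\neq 0$, so we may write $a=u\,t^k$ with $u\in A^{\times}$ and $k\in\bbN$. Multiplication by the unit $u$ preserves the ideal $(t^j)$, so Definition \ref{defj} immediately gives $(M_D)_{\caF'}^j=M_D^{j-k}$, with the convention $M_D^{j-k}=M_D$ when $j<k$. Combined with the equalities already proved, this yields the claim.

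I do not anticipate a serious obstacle: the only mildly delicate point is that $M_\caF^j$ need not itself lie in $\caK_C$, but this is irrelevant, since the primary decomposition is used only to decompose the ambient module $M$, while on $M_\caF^j$ we merely exploit that it is a $\frg_A$-submodule of $M$.
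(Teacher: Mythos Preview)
Your proof is correct and follows essentially the same approach as the paper: both reduce the first two assertions to the block orthogonality of Lemma~\ref{Alem1}(2) combined with the primary decomposition, and both deduce the final statement from Lemma~\ref{Alem2} together with the valuation structure of $A=\bbC[t]_{(t)}$. Your write-up simply supplies more of the details that the paper leaves implicit (in particular, the explicit verification that $a\neq 0$ via nondegeneracy of $\caF'$, and the observation that $(M_\caF^j)_D=M_\caF^j\cap M_D$ holds directly from the definition of the primary component without needing $M_\caF^j\in\caK_C$).
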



\begin{proof}
With Lemma \ref{Alem1}, it suffices to prove the last statement. Suppose that $M_D$ is a highest weight module. In view of Lemma \ref{Alem2}, $\caF'=a\caF''$ for some $a\in A$, where $\caF''$ is a canonical form on $M_D$. Since $A$ is a principal domain, we can find $k\in\bbN$ such that $a\in (t^k)$ and $a\not\in (t^{k+1})$. It follows that $(M_D)_{\caF'}^j=(M_D)_{\caF''}^{j-k}=M_D^{j-k}$.
\end{proof}

\subsection{Proof of Proposition \ref{cprop1}} Take $M=M(w\lambda+\delta t)_A$ for $w\in W_{[\mu]}$. Set $N=E\otimes_AM$ and $\caF=\caF_E\otimes\caF_M$. Proposition \ref{cprop2} implies $N_{D'}=M(w\mu+\delta t)_A$ is a highest weight module. It follows from Lemma \ref{clem1} and \ref{clem2} that
\[
\caT_\lambda^\mu M^j\simeq (E\otimes_A M^j)_{D'}\simeq (N_\caF^j)_{D'}\simeq N_{\caF}^j\cap N_{D'}\simeq N_{D'}^{j-k}
\]
for a $k\in\bbN$. We get $\caT_\lambda^\mu M(w\lambda+\delta t)^j_A\simeq M(w\mu+\delta t)_A^{j-k}$. Note that $\overline{M(w\lambda+\delta t)_A}=M(w\lambda)$ and $\overline{M(w\mu+\delta t)_A}=M(w\mu)$. This immediately yields $T_\lambda^\mu M(w\lambda)^j\simeq M(w\mu)^{j-k}$. Now suppose $w\in W_{[\mu]}^J$. If $k>0$, one has $T_\lambda^\mu M(w\lambda)=M(w\mu)$ and $T_\lambda^\mu M(w\lambda)^k\simeq M(w\mu)$ by taking $j=0, k$. This forces $T_\lambda^\mu(M(w\lambda)/M(w\lambda)^k)=0$ and $T_\lambda^\mu L(w\lambda)=0$. By Lemma \ref{cthm1}, we arrive at a contradiction $L(w\mu)=0$.


%
%
\section{Jantzen Conjecture of Verma modules}
%
%
In this section, we will prove the Jantzen Conjecture for singular infinitesimal characters.

We say a filtration of $M\in\caO$ is a \textit{Loewy filtration} if it has the shortest possible length provided its successive quotients is semisimple. The length of such a filtration is called the \textit{Loewy length} of $M$. The radical filtration and socle filtration are two extreme examples of Loewy filtrations. Denote by $\Rad^{i}M$ (resp. $\Soc^{i}M$) the $i$-th radical (resp. socle) filtration of $M$ when $i\geq0$. If $M=M^0\supset M^1\supset\cdots$ is a descending chain of Loewy filtration, then $\Rad^{i}M\subset M^i\subset \Soc^{s-i}M$, where $s$ is the Loewy length of $M$. We say $M$ is \emph{rigid} if its radical and socle filtrations coincide (thus its Loewy filtration is unique). For convenience, set $\Rad^{i}M=M$ and $\Soc^iM=0$ when $i\in\bbZ^{<0}$. If $M$ is rigid, then $\Rad^{i}M=\Soc^{s-i}M$ for $i\in\bbZ$. Put $\Rad_{i}M=\Rad^{i}M/\Rad^{i+1}M$ and $\Soc_{i}M=\Soc^{i}M/\Soc^{i-1}M$.

As in the previous section, the weight $\mu$ is anti-dominant and $\lambda\in\mu+Q$ is regular anti-dominant. Recall that $J=\{\alpha\in\Delta_{[\mu]}\mid\langle\mu, \alpha^\vee\rangle=0\}$. Any $w\in W_{[\mu]}$ can be uniquely written as $w=yx$ with $y\in W_{[\mu]}^J$ and $x\in W_J$.

\begin{prop}\label{vprop1}
Suppose that $w=yx\in W_{[\mu]}$ with $y\in W_{[\mu]}^J$ and $x\in W_J$. Then
\begin{itemize}
\item [(1)] $M(w\mu)$ is rigid with Loewy length $\ell_{[\mu]}(y)+1$.
\item [(2)] $T_\lambda^\mu \Rad^{j} M(w\lambda)=\Rad^{j-\ell_{[\mu]}(x)} M(y\mu)$ for any $j\in\bbN$.
\end{itemize}
\end{prop}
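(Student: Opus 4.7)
The plan is to deduce both parts from the regular Jantzen Conjecture (Beilinson--Bernstein \cite{BB2}) via the translation functor $T_\lambda^\mu$, combined with Proposition \ref{cprop1} and Lemma \ref{cthm1}.

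First I reduce: since $\langle\mu,\alpha^\vee\rangle=0$ for $\alpha\in J$, the subgroup $W_J$ fixes $\mu$, so $w\mu=yx\mu=y\mu$ and $M(w\mu)=M(y\mu)$. Thus part (1) concerns $M(y\mu)$ with $y\in W_{[\mu]}^J$. In the regular case, by \cite{BB2} together with the rigidity of regular Verma modules (Irving), $M(w\lambda)$ is rigid of Loewy length $\ell_{[\mu]}(w)+1$ with $M(w\lambda)^j=\Rad^j M(w\lambda)$. Proposition \ref{cprop1} then gives
\[
T_\lambda^\mu \Rad^j M(w\lambda)=T_\lambda^\mu M(w\lambda)^j=M(y\mu)^{j-k}
\]
for some $k\in\bbN$, and by Lemma \ref{cthm1} each subquotient $T_\lambda^\mu \Rad_j M(w\lambda)$ is semisimple, since $T_\lambda^\mu L(z\lambda)=L(z\mu)$ for $z\in W_{[\mu]}^J$ and is zero otherwise.

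For part (1), I specialize to $w=y\in W_{[\mu]}^J$, so $x=1$, $k=0$, and $M(y\mu)^j=T_\lambda^\mu \Rad^j M(y\lambda)$. Each Jantzen quotient $M(y\mu)^j/M(y\mu)^{j+1}=T_\lambda^\mu\Rad_j M(y\lambda)$ is therefore semisimple, so the Jantzen filtration is a filtration with semisimple subquotients of length at most $\ell_{[\mu]}(y)+1$. Provided no intermediate layer collapses, the length is exactly $\ell_{[\mu]}(y)+1$, giving a Loewy filtration. Combined with Jantzen property (2) (the first step is the maximal proper submodule) iteratively, one identifies $M(y\mu)^j=\Rad^j M(y\mu)$; the contravariant symmetry from the Shapovalov form identifies $M(y\mu)^j$ with $\Soc^{\ell_{[\mu]}(y)+1-j}M(y\mu)$ as well, so $M(y\mu)$ is rigid with Loewy length $\ell_{[\mu]}(y)+1$.

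For part (2), the shift $k=\ell_{[\mu]}(x)$ is pinned down by tracking endpoints: $\Rad^{\ell_{[\mu]}(w)}M(w\lambda)=L(\lambda)$ translates to $L(\mu)\neq 0$, so $M(y\mu)^{\ell_{[\mu]}(w)-k}=L(\mu)$ is the last nonzero Jantzen step. By part (1) applied to $M(y\mu)$, the Jantzen (equivalently Loewy) length is $\ell_{[\mu]}(y)+1$, so $\ell_{[\mu]}(w)-k=\ell_{[\mu]}(y)$, forcing $k=\ell_{[\mu]}(x)$. Combining with part (1),
\[
T_\lambda^\mu \Rad^j M(w\lambda)=M(y\mu)^{j-\ell_{[\mu]}(x)}=\Rad^{j-\ell_{[\mu]}(x)} M(y\mu).
\]

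The main obstacle is the non-collapse claim underlying part (1): each $T_\lambda^\mu\Rad_j M(y\lambda)$ for $0\le j\le\ell_{[\mu]}(y)$ must be nonzero, equivalently each radical layer $\Rad_j M(y\lambda)$ must contain at least one composition factor $L(z\lambda)$ with $z\in W_{[\mu]}^J$. The endpoints (head $L(y\lambda)$ and socle $L(\lambda)$) are immediate since $y,1\in W_{[\mu]}^J$, but surviving at intermediate depths requires the Kazhdan--Lusztig description of the radical filtration of $M(y\lambda)$ in the regular case together with parity/positivity properties of the relevant parabolic Kazhdan--Lusztig polynomials restricted to the coset $W_{[\mu]}^J$.
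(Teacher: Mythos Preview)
Your approach to part~(1) has a genuine gap beyond the non-collapse issue you flag. The claim ``Combined with Jantzen property~(2) iteratively, one identifies $M(y\mu)^j=\Rad^j M(y\mu)$'' does not work: property~(2) says only that $M(y\mu)^1$ is the unique maximal submodule, i.e.\ $\Rad M(y\mu)$; there is no analogous statement for $M(y\mu)^2$ inside $M(y\mu)^1$, so the induction does not start. Likewise, ``the contravariant symmetry from the Shapovalov form identifies $M(y\mu)^j$ with $\Soc^{\ell_{[\mu]}(y)+1-j}M(y\mu)$'' is not justified: the form gives self-dual Jantzen layers, not an identification of the Jantzen filtration with the socle filtration. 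Even if the non-collapse claim held, you would know only that the Jantzen filtration has semisimple layers and length $\ell_{[\mu]}(y)+1$, hence that the Loewy length is \emph{at most} $\ell_{[\mu]}(y)+1$; rigidity and the lower bound remain unproved. Proving these from your ingredients amounts to reproving Irving's theorem.

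The paper takes a completely different route for part~(1): it is established purely by citation. The integral regular case is the Kazhdan--Lusztig theory \cite[Theorem~8.15]{H3}; the integral singular case is Irving \cite[Theorem~1.4.2]{I2}; the nonintegral case reduces to the integral one via Soergel's equivalence \cite[Theorem~11]{S2}. No use is made of Proposition~\ref{cprop1} or of \cite{BB2} here. This matters architecturally: the paper deliberately keeps Proposition~\ref{vprop1} independent of Proposition~\ref{cprop1} and \cite{BB2}, and then combines all three to obtain Theorem~\ref{thm1}. Your argument folds the content of Theorem~\ref{thm1} into Proposition~\ref{vprop1}, which is why the missing pieces reappear as gaps.

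For part~(2) the paper's argument is also different from yours and avoids both Proposition~\ref{cprop1} and \cite{BB2}. Using only part~(1) (applied to the regular weights $w\lambda$ and $y\lambda$), one has $M(y\lambda)\subset\Soc^{\ell_{[\mu]}(y)+1}M(w\lambda)=\Rad^{\ell_{[\mu]}(x)}M(w\lambda)$. Since $T_\lambda^\mu$ kills $M(w\lambda)/M(y\lambda)$, the translated filtration $T_\lambda^\mu\Rad^{j+\ell_{[\mu]}(x)}M(w\lambda)$ is a filtration of $M(y\mu)$ with semisimple layers (Lemma~\ref{cthm1}) and exactly $\ell_{[\mu]}(y)+1$ steps; rigidity from part~(1) then forces it to coincide with the radical filtration. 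Your endpoint-matching argument via Proposition~\ref{cprop1} and \cite{BB2} is a legitimate alternative once part~(1) is in hand, but note that the final identification $M(y\mu)^{j-\ell_{[\mu]}(x)}=\Rad^{j-\ell_{[\mu]}(x)}M(y\mu)$ again needs rigidity together with the semisimplicity of the Jantzen layers, so you are still relying on part~(1) at the same point the paper does.
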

\begin{proof}
With $\lambda\in\mu+Q$, we can set $\ell':=\ell_{[\lambda]}=\ell_{[\mu]}$. So $\ell'(w)=\ell'(y)+\ell'(x)$.

(1) Since $w\mu=y\mu$, it suffices to show that $M(y\mu)$ is rigid with Loewy length $\ell'(y)+1$. If $\mu$ is integral and regular, this is a consequence of the Kazhdan-Lusztig theory \cite[Theorem 8.15]{H3}. If $\mu$ is integral and singular, the result follows from \cite[Theorem 1.4.2]{I2}. If $\mu$ is not integral, the proof can be reduced to the integral case by Soergel's category equivalence \cite[Theorem 11]{S2}.

(2) Since $\lambda$ is regular, $W_{[\lambda]}^{\emptyset}=W_{[\lambda]}=W_{[\mu]}$. The Loewy length of $M(w\lambda)$ (resp. $M(y\lambda)$) is $\ell'(w)+1$ (resp. $\ell'(y)+1$) by (1). Note that $M(y\lambda)\subset M(w\lambda)$. Thus
\begin{equation}\label{vprop1eq1}
M(y\lambda)\subset\Soc^{\ell'(y)+1}M(w\lambda)=\Rad^{\ell'(x)}M(w\lambda)\subset M(w\lambda).
\end{equation}
In view of \cite[Theorem 7.6]{H3}, we have $T_\lambda^\mu M(w\lambda)=M(w\mu)=M(y\mu)=T_\lambda^\mu M(y\lambda)$. Thus $T_\lambda^\mu(M(w\lambda)/M(y\lambda))=0$ and $T_\lambda^\mu\Rad^{\ell'(x)}M(w\lambda)=M(w\mu)$. If $i<\ell'(x)$, then $\Rad_{i}M(w\lambda)$ is a subquotient of $M(w\lambda)/M(y\lambda)$ by (\ref{vprop1eq1}) and $T_\lambda^\mu\Rad_{i}M(w\lambda)=0$. We claim that
\[
M(w\mu)=T_\lambda^\mu\Rad^{\ell'(x)}M(w\lambda)\supset\cdots\supset T_\lambda^\mu\Rad^{\ell'(w)+1}M(w\lambda)=0
\]
is a Loewy filtration of $M(w\mu)$. In fact, the $j$-th quotient of this filtration is equal to $T_\lambda^\mu\Rad_{j+\ell'(x)}M(w\lambda)$, which is semisimple by Lemma \ref{cthm1}. The length of this filtration is equal to $\ell'(w)+1-\ell'(x)=\ell'(y)+1$, which is the Loewy length of $M(w\mu)$ in view of (1). Hence the rigidity of $M(w\mu)$ forces $T_\lambda^\mu \Rad^{j+\ell'(x)} M(w\lambda)=\Rad^{j} M(w\mu)$ for $0\leq j\leq \ell'(y)+1$.
\end{proof}

\begin{remark}
If $\mu$ is integral, Proposition \ref{vprop1}(2) is obtained in \cite[Theorem 2.3.2]{I2}.
\end{remark}

\begin{theorem}\label{thm1}
For $w\in W_{[\mu]}^J$ and $j\in\bbN$, we have
\begin{equation*}\label{vpeq1}
M(w\mu)^j\simeq \Rad^jM(w\mu).
\end{equation*}
\end{theorem}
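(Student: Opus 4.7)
The plan is to combine Proposition \ref{cprop1} and Proposition \ref{vprop1}(2) with the known regular case of the Jantzen Conjecture in order to transport the identity between the Jantzen filtration and the radical filtration from the regular weight $\lambda$ down to the singular weight $\mu$.

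First I would invoke the regular Jantzen Conjecture of Beilinson--Bernstein \cite{BB2} (reproved algebraically in \cite{W}): since $\lambda$ is regular anti-dominant and $w\in W_{[\mu]}=W_{[\lambda]}$, the Jantzen filtration and the radical filtration on $M(w\lambda)$ coincide, i.e.\ $M(w\lambda)^j=\Rad^j M(w\lambda)$ for every $j\in\bbN$. This is the input I want to push down to $\mu$.

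Next I would apply the translation functor $T_\lambda^\mu$ to both sides. On the Jantzen side, Proposition \ref{cprop1} provides a shift $T_\lambda^\mu M(w\lambda)^j\simeq M(w\mu)^{j-k}$ for some $k\in\bbN$, and the hypothesis $w\in W_{[\mu]}^J$ forces $k=0$, so $T_\lambda^\mu M(w\lambda)^j\simeq M(w\mu)^j$. On the radical side, the uniqueness of the parabolic factorisation $w=yx$ with $y\in W_{[\mu]}^J$ and $x\in W_J$ gives $y=w$ and $x=1$ whenever $w$ already lies in $W_{[\mu]}^J$; hence $\ell_{[\mu]}(x)=0$, and Proposition \ref{vprop1}(2) specialises to $T_\lambda^\mu\Rad^j M(w\lambda)=\Rad^j M(w\mu)$. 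Chaining these three facts yields $M(w\mu)^j\simeq T_\lambda^\mu M(w\lambda)^j= T_\lambda^\mu\Rad^j M(w\lambda)\simeq\Rad^j M(w\mu)$, which is exactly the statement of the theorem.

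The real work is entirely absorbed into the two preceding propositions: the vanishing $k=0$ in Proposition \ref{cprop1}, which is precisely what makes the translation shift trivial on Verma modules indexed by $W_{[\mu]}^J$, and the rigidity of singular Verma modules underlying Proposition \ref{vprop1}(1), which is what forces the Loewy-length bookkeeping in part~(2) to be tight enough to identify the translated radical filtration with the radical filtration on the singular block. Once those ingredients are in hand, Theorem \ref{thm1} reduces to the short chain above and I do not anticipate any further obstacle.
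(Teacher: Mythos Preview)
Your proof is correct and follows essentially the same route as the paper: invoke the regular case $M(w\lambda)^j=\Rad^j M(w\lambda)$ from \cite{BB2}, then use Proposition~\ref{cprop1} (with $k=0$ since $w\in W_{[\mu]}^J$) and Proposition~\ref{vprop1}(2) (with $x=1$) to transport both filtrations through $T_\lambda^\mu$ and match them at $\mu$. The paper's argument is the same three-step chain, just written more tersely.
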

\begin{proof}
Proposition \ref{cprop1} implies $T_\lambda^\mu M(w\lambda)^j\simeq M(w\mu)^{j}$. By \cite{BB2}, one has $M(w\lambda)^j=\Rad^j M(w\lambda)$. The theorem then follows from Proposition \ref{vprop1}.
\end{proof}

\begin{theorem}\label{thm2}
The Jantzen Conjecture holds for all infinitesimal characters.
\end{theorem}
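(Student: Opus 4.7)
The plan is to deduce the Jantzen Conjecture from Theorem \ref{thm1} (Jantzen equals radical for all Verma modules), the rigidity of Verma modules (Proposition \ref{vprop1}(1)), and a general fact that the socle filtration of a submodule is the intersection of the ambient socle filtration with the submodule.

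First, given $M(\mu)\subset M(\lambda)$, I would use the BGG theorem to put $\lambda$ and $\mu$ in a common $W$-orbit. Writing $\lambda=w_\lambda\mu_0$, $\mu=w_\mu\mu_0$ with $\mu_0$ anti-dominant and $w_\lambda,w_\mu\in W_{[\mu_0]}^J$, a short combinatorial check using $w(\Phi_J^+)\subset\Phi_{[\mu_0]}^+$ for $w\in W_{[\mu_0]}^J$ establishes $|\Phi_{w\mu_0}^+|=\ell_{[\mu_0]}(w)$, giving
\[
r=|\Phi_\lambda^+|-|\Phi_\mu^+|=\ell_{[\mu_0]}(w_\lambda)-\ell_{[\mu_0]}(w_\mu).
\]
Proposition \ref{vprop1}(1) then says $M(\lambda)$ and $M(\mu)$ are rigid with Loewy lengths $L_\lambda:=\ell_{[\mu_0]}(w_\lambda)+1$ and $L_\mu:=\ell_{[\mu_0]}(w_\mu)+1$; in particular $L_\lambda-L_\mu=r$.

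Next, Theorem \ref{thm1} lets me rewrite $M(\lambda)^i=\Rad^iM(\lambda)$ and $M(\mu)^{i-r}=\Rad^{i-r}M(\mu)$, and rigidity converts these to
\[
\Rad^iM(\lambda)=\Soc^{L_\lambda-i}M(\lambda),\qquad \Rad^{i-r}M(\mu)=\Soc^{L_\lambda-i}M(\mu)
\]
(using $L_\mu-(i-r)=L_\lambda-i$). Setting $k:=L_\lambda-i$, the Jantzen Conjecture reduces to
\[
M(\mu)\cap\Soc^kM(\lambda)=\Soc^kM(\mu).
\]

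The final step is a purely categorical fact: for any submodule $N$ of an object $M$ of finite Loewy length, $\Soc^kN=\Soc^kM\cap N$ for all $k\geq 0$. I would establish this by induction, using the base case $\Soc N=\Soc M\cap N$ (a semisimple submodule of $N$ is a semisimple submodule of $M$ contained in $N$, and $\Soc M\cap N$ is semisimple as a submodule of the semisimple $\Soc M$) and applying the same statement to $M/\Soc^{k-1}M$ and its submodule $N/(\Soc^{k-1}M\cap N)=N/\Soc^{k-1}N$. Specialising to $N=M(\mu)\subset M=M(\lambda)$ completes the argument. The main obstacle I anticipate is the combinatorial identity $|\Phi_{w\mu_0}^+|=\ell_{[\mu_0]}(w)$, which uses the defining property of shortest coset representatives in $W_{[\mu_0]}^J$ to force $w^{-1}\alpha\notin\Phi_J$ for every $\alpha\in\Phi_{[\mu_0]}^+$ with $w^{-1}\alpha\in\Phi_{[\mu_0]}^-$. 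After that, the argument is formal and rests only on Theorem \ref{thm1}, Proposition \ref{vprop1}(1), and the socle-restriction fact.
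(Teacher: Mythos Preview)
Your proposal is correct and follows essentially the same route as the paper: reduce the Jantzen Conjecture via Theorem~\ref{thm1} to an identity about radical filtrations, use rigidity (Proposition~\ref{vprop1}(1)) to convert radicals to socles, and then invoke the general fact $\Soc^k N = N\cap\Soc^k M$ for a submodule $N\subset M$. The one noticeable difference is in the combinatorial step: the paper first applies Soergel's category equivalence to reduce to the integral case (so that $W_{[\mu_0]}=W$ and the identity $|\Phi_{w\mu_0}^+|=\ell(w)$ becomes the standard inversion count), whereas you establish $|\Phi_{w\mu_0}^+|=\ell_{[\mu_0]}(w)$ directly in the nonintegral setting using the shortest-coset-representative property of $W_{[\mu_0]}^J$. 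Your direct argument is sound and slightly more self-contained; the paper's reduction trades that computation for a citation.
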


\begin{proof}
It suffices to consider $M(x\mu)\subset M(w\mu)$ for $x, w\in W_{[\mu]}^J$. With Theorem \ref{thm1}, it suffices to show that $M(x\mu)\cap \Rad^jM(w\mu)=\Rad^{j-r}M(x\mu)$
for $j\in\bbN$ and $r=|\Phi^+_{w\mu}|-|\Phi^+_{x\mu}|$. By Soergel's category equivalence \cite{S2}, we can assume that $\mu$ is integral. In this case, $W_{[\mu]}=W$. Recall that $\Phi_{w\mu}^+=\{\alpha>0\mid \langle w\mu, \alpha^\vee\rangle\in\bbZ^{>0}\}$. So
\[
\Phi_{w\mu}^+=\{\alpha>0\mid \langle \mu, (w^{-1}\alpha)^\vee\rangle\in\bbZ^{>0}\}=\{\alpha>0\mid w^{-1}\alpha<0\}=\ell(w).
\]
Thus $r=\ell(w)-\ell(x)$. By Proposition \ref{vprop1}, the Verma module $M(w\mu)$ (resp. $M(x\mu)$) is rigid with Loewy length $\ell(w)+1$ (resp. $\ell(x)+1$). Therefore
\[
\begin{aligned}
M(x\mu)\cap \Rad^jM(w\mu)=&M(x\mu)\cap \Soc^{\ell(w)+1-j}M(w\mu)\\
=&\Soc^{\ell(w)+1-j}M(x\mu)\\
=&\Rad^{j-(\ell(w)-\ell(x))}M(x\mu).
\end{aligned}
\]

\end{proof}

%
%
\section{Jantzen filtrations of parabolic Verma modules}
%
%

Since homomorphisms between parabolic Verma modules are usually not embeddings, we can not build result like Theorem \ref{thm2} for parabolic Verma modules. Fortunately, the Jantzen filtrations and radical filtrations still coincide. This can be obtained by a similar argument with relatively cumbersome notation.

\subsection{Parabolic Verma modules} Fix $I\subset\Delta$. It generates a subsystem $\Phi_I\subset\Phi$ with a positive root system $\Phi^+_I:=\Phi_I\cap\Phi^+$. Denote by $W_I$ the Weyl group of $\Phi_I$, with the longest element $w_I$. Let $\frp=\frl\oplus\fru$ be the standard parabolic subalgebra of $\frg$ corresponding to $I$ with Levi subalgebra $\frl$ and nilradical $\fru$. The category $\caO^\frp$ is the full subcategory of $\caO$ consisting of modules which are locally $\frp$-finite \cite{H3}. Put
\[
\Lambda_I^+:=\{\lambda\in\frh^*\ |\ \langle\lambda, \alpha^\vee\rangle\in\bbZ^{>0}\ \mbox{for all}\ \alpha\in I\}.
\]
For any $\lambda\in\Lambda_I^+$, there exists a simple finite-dimensional $\frl$-module $F(\lambda-\rho)$ with highest weight $\lambda-\rho$. Extend it to a $\frp$-module with trivial $\fru$ action. The (ordinary) \emph{parabolic Verma module} is defined by
\[
M_I(\lambda):=U(\frg)\otimes_{U(\frp)} F(\lambda-\rho).
\]
The module $M_I(\lambda)$ can be parameterized by $\lambda=w_Iw\mu$ for an anti-dominant weight $\mu\in\frh^*$ and $w\in{}^IW_{[\mu]}^J$ (e.g., \cite{BN}), where $J=\{\alpha\in\Delta_{[\mu]}\mid\langle\mu, \alpha^\vee\rangle=0\}$ and
\[
{}^IW_{[\mu]}^J:=\{w\in W_{[\mu]}^J\mid \ell_{[\mu]}(s_\alpha w)=\ell_{[\mu]}(w)+1\ \mbox{and}\ s_\alpha w\in W_{[\mu]}^J\ \mbox{for all}\ \alpha\in I\}.
\]
Note that $I\subset\Delta_{[\mu]}$. Denote ${}^IW_{[\mu]}:={}^IW_{[\mu]}^\emptyset$. The set ${}^IW_{[\mu]}^J$ contains exactly those elements $w$ in ${}^IW_{[\mu]}\cap W_{[\mu]}^J$ (the set of the shortest $(W_I, W_J)$ double cosets representatives in $W_{[\mu]}$) such that $|W_IwW_J|=|W_I|\cdot|W_J|$. It is possibly empty, while ${}^IW_{[\mu]}\cap W_{[\mu]}^J$ always contains the identity element of $W_{[\mu]}$.

From now on, $\mu\in\frh^*$ is always anti-dominant and $\lambda\in\mu+Q$ is anti-dominant regular. Let $Z_\frp: \caO\ra\caO^\frp$ be the Zuckerman functor which takes the maximal quotient in $\caO^\frp$. It is the left adjoint of the inclusion functor
$i_\frp: \caO^\frp\ra\caO$ and commutes with any projective functor, including $T_\lambda^\mu$ (see \cite[Proposition 6.1]{M}). In particular, $Z_\frp(M(\lambda))=M_I(\lambda)$ when $\lambda\in\Lambda_I^+$ and $Z_\frp(M(\lambda))=0$ otherwise.

\begin{prop}\label{gprop1}
Let $w\in {}^IW_{[\mu]}$.
\[
T_\lambda^\mu \Rad^jM_I(w_Iw\lambda)\simeq\begin{cases}\Rad^{j-\ell_{[\mu]}(x)}M_I(w_Iy\mu), &\text{if $w=yx$, $y\in {}^IW_{[\mu]}^J, x\in W_J$};\\
0, &\text{otherwise}.
\end{cases}
\]
\end{prop}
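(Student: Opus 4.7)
The plan is to derive Proposition~\ref{gprop1} as a formal consequence of Proposition~\ref{vprop1}(2) via the Zuckerman functor $Z_\frp$, using Shan's compatibility of $Z_\frp$ with radical filtrations as the bridge between the ordinary and parabolic Verma settings.

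The single structural input I would take from Shan is the identity
\[
Z_\frp\Rad^jM(\nu)=\begin{cases}\Rad^jM_I(\nu), & M_I(\nu)\neq0,\\ 0, & \text{otherwise},\end{cases}
\]
valid for every $j\in\bbN$ and every weight $\nu$ of interest. This is the parabolic analogue, extended to the whole filtration, of the classical identity $Z_\frp M(\nu)=M_I(\nu)$ (or $0$), and is precisely what Shan's analysis of Jantzen filtrations of parabolic Verma modules provides when combined with Theorem~\ref{thm1} identifying Jantzen and radical filtrations for ordinary Vermas.

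Given $w\in{}^IW_{[\mu]}$, write $w_Iw\in W_{[\mu]}$ uniquely as $w_Iw=(w_Iy)x$ with $x\in W_J$; the element $w_Iy$ lies in $W_{[\mu]}^J$ because $y\in{}^IW_{[\mu]}^J$ forces $\ell_{[\mu]}(w_Iy)=\ell_{[\mu]}(w_I)+\ell_{[\mu]}(y)$ by a short induction, and the same $x$ appears in both decompositions. Applying Proposition~\ref{vprop1}(2) to this decomposition of $w_Iw$ in $W_{[\mu]}$ gives $T_\lambda^\mu\Rad^jM(w_Iw\lambda)=\Rad^{j-\ell_{[\mu]}(x)}M(w_Iy\mu)$. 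Combined with Shan's compatibility above and the classical commutation $Z_\frp T_\lambda^\mu=T_\lambda^\mu Z_\frp$, this produces the chain
\[
\begin{aligned}
T_\lambda^\mu\Rad^jM_I(w_Iw\lambda)&=T_\lambda^\mu Z_\frp\Rad^jM(w_Iw\lambda)\\
&=Z_\frp T_\lambda^\mu\Rad^jM(w_Iw\lambda)\\
&=Z_\frp\Rad^{j-\ell_{[\mu]}(x)}M(w_Iy\mu)\\
&=\Rad^{j-\ell_{[\mu]}(x)}M_I(w_Iy\mu),
\end{aligned}
\]
which is the first clause of the proposition in the case $y\in{}^IW_{[\mu]}^J$. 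When $y\notin{}^IW_{[\mu]}^J$ one has $M_I(w_Iy\mu)=Z_\frp M(w_Iy\mu)=0$, so the last term vanishes and hence so does every preceding term, giving the second clause. Non-integrality of $\mu$ is reduced to the integral singular situation through Soergel's combinatorial equivalence, exactly as in the proof of Theorem~\ref{thm2}.

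The main obstacle is securing Shan's compatibility of $Z_\frp$ with radical filtrations of Verma modules: the commutation $Z_\frp T_\lambda^\mu=T_\lambda^\mu Z_\frp$ and the vanishing $Z_\frp M(w_Iy\mu)=0$ for $y\notin{}^IW_{[\mu]}^J$ are classical, but the preservation of each $\Rad^j$ under $Z_\frp$ is a nontrivial structural theorem that lies at the heart of Shan's extension of the identification of Jantzen and radical filtrations to $\caO^\frp$. Once this input is granted, Proposition~\ref{gprop1} reduces to the formal three-line diagram chase above.
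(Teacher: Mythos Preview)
Your chain of equalities hinges on the identity
\[
Z_\frp\bigl(\Rad^jM(\nu)\bigr)\;\simeq\;\Rad^jM_I(\nu),
\]
interpreted as the Zuckerman functor applied to the \emph{object} $\Rad^jM(\nu)$; only under this reading does the functorial commutation $T_\lambda^\mu Z_\frp\simeq Z_\frp T_\lambda^\mu$ let you pass to the second line. Unfortunately this identity is false. Take $\frg=\frsl_3$, $I=\{\alpha_1\}$, and $\nu$ regular integral dominant. Then $M_I(\nu)$ has Loewy length $2$, so $\Rad^2M_I(\nu)=0$. On the other hand $\Rad^2M(\nu)$ has head $L(s_1s_2\nu)\oplus L(s_2s_1\nu)$, and $L(s_2s_1\nu)$ lies in $\caO^\frp$; hence $Z_\frp\bigl(\Rad^2M(\nu)\bigr)$ surjects onto $L(s_2s_1\nu)\neq0$. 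Shan's work concerns Jantzen filtrations of parabolic Verma modules and does not supply the identity you invoke; combined with Theorem~\ref{thm1} it still says nothing about $Z_\frp$ applied to the individual terms $\Rad^jM(\nu)$. So the ``single structural input'' you isolate as the main obstacle is not available, and the argument collapses at the first equality.

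The paper's proof sidesteps this entirely by working not with the functor $Z_\frp$ on each $\Rad^j$, but with the concrete surjection $f:M(w_Iw\lambda)\twoheadrightarrow M_I(w_Iw\lambda)$ and the elementary fact that any surjection satisfies $f(\Rad^jM)=\Rad^j f(M)$. Exactness of $T_\lambda^\mu$ then gives $T_\lambda^\mu\bigl(f(\Rad^jM(w_Iw\lambda))\bigr)=(T_\lambda^\mu f)\bigl(T_\lambda^\mu\Rad^jM(w_Iw\lambda)\bigr)$, and Proposition~\ref{vprop1}(2) identifies the latter with $g\bigl(\Rad^{j-\ell_{[\mu]}(x)}M(w_Iy\mu)\bigr)=\Rad^{j-\ell_{[\mu]}(x)}M_I(w_Iy\mu)$, where $g=T_\lambda^\mu(f)$. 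No appeal to Shan, and no separate reduction to the integral case, is needed. If you rewrite your argument replacing ``$Z_\frp\Rad^jM(\nu)$'' by ``the image of $\Rad^jM(\nu)$ under the quotient map $M(\nu)\to M_I(\nu)$'', it becomes correct and coincides with the paper's proof---but then the justification is the elementary surjection-radical compatibility, not anything from \cite{Sh}.
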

\begin{proof}
The case $j=0$ follows from $T_\lambda^\mu Z_\frp=Z_\frp T_\lambda^\mu$ and Proposition \ref{vprop1}. Indeed, set $\ell':=\ell_{[\mu]}$. Then
\[
T_\lambda^\mu M_I(w_Iw\lambda)=T_\lambda^\mu Z_\frp(M(w_Iw\lambda))=Z_\frp T_\lambda^\mu(M(w_Iw\lambda))=M_I(w_Iy\mu).
\]
If $j>0$, it suffices to consider the case $w=yx$ with $y\in {}^IW_{[\mu]}^J$ and $x\in W_J$.
Let $f: M(w_Iw\lambda)\ra M_I(w_Iw\lambda)$ be the natural projection. It follows from the exactness of $T_\lambda^\mu$ that $g=T_\lambda^\mu(f): M(w_Iw\mu)=M(w_Iy\mu)\ra M_I(w_Iy\mu)$ is also a projection. Combined with Proposition \ref{vprop1},
\[
\begin{aligned}
T_\lambda^\mu \Rad^jM_I(w_Iw\lambda)&\simeq T_\lambda^\mu(f(\Rad^jM(w_Iw\lambda)))\simeq T_\lambda^\mu(f)(T_\lambda^\mu \Rad^jM(w_Iw\lambda))\\
&\simeq g(\Rad^{j-\ell'(x)}M(w_Iy\mu))\\
&\simeq \Rad^{j-\ell'(x)}M_I(w_Iy\mu).
\end{aligned}
\]
\end{proof}

\begin{remark}
If $\mu$ is integral, the above result is equivalent to Theorem 4.3 in \cite{CM}. The above proposition is a natural generalization to the nonintegral case.
\end{remark}

\subsection{Jantzen filtrations of parabolic Verma modules} This subsection is devoted to proving the following result, which is a generalization of Proposition \ref{cprop1}.

\begin{prop}\label{gprop2}
Fix $w=yx\in {}^IW_{[\mu]}$ with $y\in {}^IW_{[\mu]}^J$ and $x\in W_J$. There exists a $k\in\bbN$ such that
\begin{equation}\label{gprop1eq1}
T_\lambda^\mu M_I(w_Iw\lambda)^j\simeq M_I(w_Iy\mu)^{j-k}
\end{equation}
for all $j\in\bbN$. In particular, if $w\in {}^IW_{[\mu]}^J$, then $k=0$.
\end{prop}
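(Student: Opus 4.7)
The plan is to mirror the proof of Proposition~\ref{cprop1}, replacing ordinary Verma modules by their parabolic counterparts throughout. The machinery of Section~3 --- the extended translation functor $\caT_\lambda^\mu$, the two filtration Lemmas~\ref{clem1} and~\ref{clem2}, and the canonical contravariant-form framework --- applies to any highest weight $\frg_A$-module in $\caK_C$. So the only genuinely new input needed is a parabolic analogue of Proposition~\ref{cprop2}; once that is in hand the remainder of the proof is essentially verbatim.

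First I would introduce the deformed parabolic Verma module
\[
M_I(w_Iw\lambda+\delta t)_A := U(\frg_A)\otimes_{U(\frp_A)} F(w_Iw\lambda+\delta t-\rho)_A,
\]
where $F(\cdot)_A$ is the $A$-scalar extension of the simple finite-dimensional $\frl$-module, extended trivially across $\fru_A$. This is a highest weight $\frg_A$-module lying in $\caK_C$; Lemma~\ref{Alem2} equips it with a canonical contravariant form $\caF$, and Definition~\ref{defj} identifies $M_I(w_Iw\lambda)^j$ with the mod-$t$ reduction of the corresponding Jantzen filtration.

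Second I would establish the parabolic analogue of Proposition~\ref{cprop2}:
\[
\caT_\lambda^\mu M_I(w_Iw\lambda+\delta t)_A \simeq M_I(w_Iy\mu+\delta t)_A.
\]
The cleanest route lifts the classical equality $M_I(\nu)=Z_\frp M(\nu)$ to the deformed category, uses that the Zuckerman functor $Z_\frp$ is exact and intertwines with both the projective functor $E(\mu-\lambda)_A\otimes_A(-)$ and the central-character projections $\mathrm{pr}_D$, and then invokes Proposition~\ref{cprop2}:
\[
\caT_\lambda^\mu M_I(w_Iw\lambda+\delta t)_A \simeq Z_\frp\,\caT_\lambda^\mu M(w_Iw\lambda+\delta t)_A \simeq Z_\frp M(w_Iy\mu+\delta t)_A = M_I(w_Iy\mu+\delta t)_A,
\]
using $w_Iy\mu\in\Lambda_I^+$ because $y\in {}^IW_{[\mu]}^J$. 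A purely module-theoretic alternative is to produce a parabolic Verma flag on $E(\mu-\lambda)_A\otimes_A M_I(w_Iw\lambda+\delta t)_A$ over $A$ and then apply Jantzen's central-character criterion to isolate the $D'$-piece, paralleling the Verma argument exactly. This step is the main technical obstacle: one must verify that $Z_\frp$ behaves well on the deformed category $\caK_C$, or equivalently that a parabolic Verma flag exists over $A$. Given the work of Shan, this should be routine but it is the one place where Section~3 does not transfer without comment.

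With the parabolic analogue of Proposition~\ref{cprop2} in hand, set $N:=E(\mu-\lambda)_A\otimes_A M_I(w_Iw\lambda+\delta t)_A$ and take on $N$ the tensor-product form $\caF=\caF_E\otimes\caF_{M_I}$. Lemma~\ref{clem1} gives $N_\caF^j = E(\mu-\lambda)_A\otimes_A M_I(w_Iw\lambda+\delta t)_A^j$, and Lemma~\ref{clem2} applied to the highest weight module $N_{D'}=M_I(w_Iy\mu+\delta t)_A$ produces $k\in\bbN$ with
\[
\caT_\lambda^\mu M_I(w_Iw\lambda+\delta t)_A^j \simeq (N_\caF^j)_{D'} \simeq M_I(w_Iy\mu+\delta t)_A^{j-k}.
\]
Reducing modulo $t$ yields $T_\lambda^\mu M_I(w_Iw\lambda)^j \simeq M_I(w_Iy\mu)^{j-k}$. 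For the final clause assume $w\in {}^IW_{[\mu]}^J$, so $y=w$ and $x=1$; if $k>0$, comparing $j=0$ and $j=k$ forces $T_\lambda^\mu(M_I(w_Iw\lambda)/M_I(w_Iw\lambda)^k)=0$, whence $T_\lambda^\mu L(w_Iw\lambda)=0$ since the simple head is a quotient. The length-additivity encoded in the defining condition $|W_IwW_J|=|W_I|\cdot|W_J|$ for elements of ${}^IW_{[\mu]}^J$ gives $w_Iw\in W_{[\mu]}^J$, so Lemma~\ref{cthm1} supplies $T_\lambda^\mu L(w_Iw\lambda)=L(w_Iw\mu)\neq 0$, a contradiction.
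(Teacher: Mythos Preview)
Your overall strategy matches the paper's exactly: the paper's entire proof reads ``Imitate closely the proof of Proposition~\ref{cprop1}, keeping in mind of Proposition~\ref{gprop3},'' where Proposition~\ref{gprop3} is precisely the parabolic analogue of Proposition~\ref{cprop2} that you identify as the one new ingredient. Your application of Lemmas~\ref{clem1} and~\ref{clem2} and the $k=0$ argument via Lemma~\ref{cthm1} are correct and in line with the paper.

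Two points deserve correction or comment. First, your ``cleanest route'' via the Zuckerman functor is not sound as written: $Z_\frp$ is only right exact (it is a left adjoint), not exact, and its lift to the deformed category $\caK_C$ is not something the paper sets up. The paper does \emph{not} take this route; it proves Proposition~\ref{gprop3} directly by producing a parabolic Verma flag on $E(\mu-\lambda)_A\otimes_A M_I(w_Iw\lambda+\delta t)_A$ (with subquotients $M_I(\nu'+\rho+\delta t)_A$ indexed by the $\frl$-highest weights $\nu'$ in $E(\mu-\lambda)\otimes F(w_Iw\lambda-\rho)$) and then applying $\mathrm{pr}_{D'}$ --- exactly your ``module-theoretic alternative.'' Second, a subtlety you glossed over in defining the deformed parabolic Verma module: for $F(w_Iw\lambda+\delta t-\rho)_A$ to be the $A$-scalar extension of a finite-dimensional $\frl$-module (so that tensoring by $A_{\delta t}$ is an $\frl_A$-module operation), one needs $\langle\delta,\alpha^\vee\rangle=0$ for all $\alpha\in I$. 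The paper therefore changes the deformation direction here, taking $\delta$ dominant with $\langle\delta,\alpha^\vee\rangle=0$ for $\alpha\in I$ and $\langle\delta,\alpha^\vee\rangle=1$ for $\alpha\in\Delta\setminus I$, rather than the $\delta=\rho$ used in Section~3. This does not affect the structure of the argument but is needed for the construction to make sense.
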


Similarly, we need to extend the definition of parabolic Verma modules. Suppose $A=\bbC[t]_{(t)}$. Choose the dominant weight $\delta\in\frh^*$ such that $\langle\delta, \alpha^\vee\rangle=0$ for $\alpha\in I$ and $\langle\delta, \alpha^\vee\rangle=1$ for $\alpha\in\Delta\backslash I$. Thus $A_{\delta t}$ can be viewed as an $\frl_A$-module with trivial $E_\alpha$ action for $\alpha\in\Phi_I$. For any $\nu\in\Lambda_I^+$, there exists a simple finite-dimensional $\frl$-module $F(\nu-\rho)$ with highest weight $\nu-\rho$. Denote $F(\nu-\rho+\delta t)_A=A_{\delta t}\otimes_\bbC F(\nu-\rho)$. Then $F(\nu-\rho+\delta t)_A$ is an $\frl_A$-module. Extend it to a $\frp_A$-module with trivial $\fru_A$-action. The \emph{parabolic Verma module} (over $A$) is defined by
\[
M_I(\nu+\delta t)_A:=U(\frg_A)\otimes_{U(\frp_A)} F(\nu-\rho+\delta t)_A.
\]

\begin{remark}
To avoid excessive notation, our definition of parabolic Verma modules over $A$, which is enough for our argument in this paper, is not as general as that of Verma modules over $A$.
\end{remark}

\begin{prop}\label{gprop3}
For $w\in {}^IW_{[\mu]}$,
\[
\caT_\lambda^\mu M_I(w_Iw\lambda+\delta t)_A\simeq\begin{cases}M_I(w_Iy\mu+\delta t)_A, &\text{if $w=yx$, $y\in {}^IW_{[\mu]}^J, x\in W_J$};\\
0, &\text{otherwise}.
\end{cases}
\]
\end{prop}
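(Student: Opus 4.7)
The plan is to extend the tensor-product analysis of Proposition \ref{cprop2} to the parabolic setting, using a $\frp$-filtration of the finite-dimensional module $E=E(\mu-\lambda)_A$ together with the standard tensor-induction isomorphism
\[
E\otimes_A M_I(w_Iw\lambda+\delta t)_A\simeq U(\frg_A)\otimes_{U(\frp_A)}\bigl(E|_{\frp_A}\otimes_A F(w_Iw\lambda-\rho+\delta t)_A\bigr).
\]
Since $\fru$ acts nilpotently on $E$, the restriction $E|_{\frp_A}$ admits a filtration by $\frp_A$-submodules whose successive quotients are irreducible $\frl$-modules with trivial $\fru_A$-action. Tensoring with $F(w_Iw\lambda-\rho+\delta t)_A$ and further decomposing as $\frl_A$-modules by Clebsch--Gordan (the deformation $\delta t$ is benign because $\delta$ is orthogonal to $I$, so $\frl_A$ really acts through $A\otimes_\bbC\frl$), I get a $\frp_A$-filtration with successive quotients $F(\nu-\rho+\delta t)_A$ for various $\nu$. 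Inducing up yields a $\frg_A$-filtration of $E\otimes_A M_I(w_Iw\lambda+\delta t)_A$ whose successive quotients are parabolic Vermas $M_I(\nu+\delta t)_A$.

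Next I would apply $\mathrm{pr}_{D'}$ with $D'=W_{[\mu]}\mu+\delta t$, which retains only the quotients with $\nu\in W_{[\mu]}\mu$. By the same Jantzen projection lemma \cite[\S2.9]{J2} invoked in the proof of Proposition \ref{cprop2}, the unique weight of $E$ translating $w_Iw\lambda$ into the $W_{[\mu]}$-orbit of $\mu$ is the extremal weight $w_Iw(\mu-\lambda)$, occurring with multiplicity one. Hence at most one quotient can survive, with candidate highest weight $w_Iw\mu=w_Iy\mu$ (using $x\mu=\mu$ for $x\in W_J$).

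Finally I would split into cases. If $y\in{}^IW_{[\mu]}^J$, the parameterization of parabolic Vermas in $\caO^\frp$ ensures $w_Iy\mu\in\Lambda_I^+$, so the corresponding irreducible $\frl$-summand $F(w_Iy\mu-\rho)$ genuinely appears in the $\frl$-decomposition and the projection produces $M_I(w_Iy\mu+\delta t)_A$. Otherwise $w_Iy\mu\notin\Lambda_I^+$: no irreducible finite-dimensional $\frl$-module with this highest weight exists, the would-be summand is absent from the decomposition, and the projection vanishes.

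The main obstacle is the combinatorial equivalence $y\in{}^IW_{[\mu]}^J\Leftrightarrow w_Iy\mu\in\Lambda_I^+$, which is the content of the parameterization \cite{BN} and must be verified in the nonintegral setting. A cleaner but less self-contained alternative is to extend the Zuckerman functor $Z_\frp$ to a deformed $Z_\frp^A$ on $\caK_C$ (as the left adjoint to inclusion) and note that it commutes with the projective functor $\caT^\mu_\lambda$; Proposition \ref{cprop2} then gives formally $\caT^\mu_\lambda M_I(w_Iw\lambda+\delta t)_A=Z_\frp^A M(w_Iy\mu+\delta t)_A$, which equals $M_I(w_Iy\mu+\delta t)_A$ or $0$ according to the same dichotomy.
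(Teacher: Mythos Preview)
Your proposal is correct and follows the same core strategy as the paper: filter $E(\mu-\lambda)_A\otimes_A M_I(w_Iw\lambda+\delta t)_A$ by deformed parabolic Vermas arising from the $\frl$-decomposition of $E(\mu-\lambda)\otimes F(w_Iw\lambda-\rho)$, then apply the exact functor $\mathrm{pr}_{D'}$. The only substantive difference lies in how the surviving quotient is identified. You argue directly via the extremal-weight lemma \cite[\S2.9]{J2} and then invoke the parameterization $y\in{}^IW_{[\mu]}^J\Leftrightarrow w_Iy\mu\in\Lambda_I^+$ to decide whether the candidate $\frl$-summand $F(w_Iy\mu-\rho)$ actually occurs; this leaves you with the PRV-type verification you flag as an obstacle. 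The paper instead invokes Proposition \ref{gprop1} (its $j=0$ case, proved via $T_\lambda^\mu Z_\frp=Z_\frp T_\lambda^\mu$), which already computes $T_\lambda^\mu M_I(w_Iw\lambda)$ at the specialization $t=0$ and thereby determines which deformed quotient survives, with the dichotomy and the multiplicity-one statement coming for free. Your deformed-Zuckerman alternative is essentially the mechanism behind Proposition \ref{gprop1} itself, so the two arguments converge; the paper's packaging is simply shorter because it recycles a result already in hand.
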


\begin{proof}
This is a generalization of Proposition \ref{cprop2}, we adopt the proof there with necessary modification. The module $E(\mu-\lambda)_A\otimes_A M_I(w_Iw\lambda+\delta t)_A$ has a filtration with quotients isomorphic to $M_I(\nu'+\rho+\delta t)_A$, where $\nu'$ are highest weights of simple $\frl$-modules appeared in the decomposition of $\frl$-module $E(\mu-\lambda)\otimes_\bbC F(w_Iw\lambda-\rho)$. In view of Proposition \ref{gprop1}, the module $\mathrm{pr}_{D'}(M_I(\nu'+\rho+\delta t)_A)$ is vanished unless $v'=w_Iw\mu-\rho$, with multiplicity $1$. The lemma then follows from the exactness of $\mathrm{pr}_{D'}$.
\end{proof}

\subsection{Proof of Proposition \ref{gprop2}} Imitate closely the proof of Proposition \ref{cprop1}, keeping in mind of Proposition \ref{gprop3}.

\subsection{Coincidence of Jantzen filtration and radical filtration}

\begin{theorem}\label{thm3}
For $w\in {}^IW_{[\mu]}^J$ and $j\in\bbN$, we have
\begin{equation}\label{t3eq1}
M_I(w_Iw\mu)^j\simeq \Rad^jM_I(w_Iw\mu).
\end{equation}
\end{theorem}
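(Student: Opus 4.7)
The plan is to imitate the proof of Theorem \ref{thm1} in the parabolic setting, relying on the three ingredients already assembled in this section: Proposition \ref{gprop2} for the behaviour of Jantzen filtrations under translation, Proposition \ref{gprop1} for the analogous behaviour of radical filtrations, and the regular-case coincidence supplied by Shan. Fix $w\in {}^IW_{[\mu]}^J$. Writing $w = yx$ as in Proposition \ref{gprop2}, one has $y = w$ and $x = e$, so the integer $k$ of that proposition vanishes and
\[
T_\lambda^\mu M_I(w_I w\lambda)^j \simeq M_I(w_I w\mu)^j
\]
for every $j\in\bbN$. This reduces the theorem to a statement about the regular weight $\lambda$.

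Next I would invoke Shan's theorem in the regular setting, namely $M_I(w_I w\lambda)^j = \Rad^j M_I(w_I w\lambda)$, and apply the translation functor $T_\lambda^\mu$ to both sides. Since $x = e$ gives $\ell_{[\mu]}(x) = 0$, Proposition \ref{gprop1} yields
\[
T_\lambda^\mu \Rad^j M_I(w_I w\lambda) \simeq \Rad^j M_I(w_I w\mu).
\]
Chaining the three isomorphisms produces the desired $M_I(w_I w\mu)^j \simeq \Rad^j M_I(w_I w\mu)$, which completes the proof.

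The only substantive external input is Shan's coincidence of Jantzen and radical filtrations for parabolic Verma modules with regular infinitesimal character; everything else is a formal consequence of the exactness of the translation functor and the two translation lemmas already proved in this section. The main subtlety to verify is that Shan's result applies to the possibly nonintegral regular anti-dominant weight $\lambda$ employed here: for integral $\lambda$ her theorem is direct, and the general case should reduce to the integral one via the Soergel category equivalence \cite{S2}, exactly as invoked in the proof of Proposition \ref{vprop1}(1). Once this reduction is in place, the remainder of the argument is essentially a transcription of the proof of Theorem \ref{thm1}, with the parabolic versions \ref{gprop1} and \ref{gprop2} replacing their ordinary-Verma counterparts.
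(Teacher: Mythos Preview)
Your proposal is correct and follows essentially the same route as the paper: reduce to the regular case via Propositions \ref{gprop1} and \ref{gprop2} (with $y=w$, $x=e$, hence $k=0$), then invoke the regular-case coincidence of Jantzen and radical filtrations, handling nonintegral $\lambda$ through Soergel's equivalence \cite{S2}. The only refinement worth noting is that the paper is more explicit about what ``Shan's result'' actually comprises: it is not a single theorem but the combination of Shan's regular functions \cite[Lemma 5.1]{Sh} (extending \cite[Lemma 3.5.1]{BB2} to the parabolic setting), the weight-filtration interpretation of \cite[\S5.1.3]{BB2} in the integral case, and the identification of weight and radical filtrations from \cite{BG, C}; the nonintegral reduction then uses that the Jantzen filtration is determined by a map to the dual, which is preserved under Soergel's equivalence.
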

\begin{proof}
The proof is similar to that of Theorem \ref{thm1}. In view of Proposition \ref{gprop1} and \ref{gprop2}, we only need to show that Jantzen filtration and radical filtration of a regular (ordinary) parabolic Verma module coincide. Here we adopt Shan's idea \cite{Sh}, along with results in \cite{BB2, S2}. In \cite[Lemma 5.1]{Sh}, Shan proved the existence of regular functions on related schemes. As pointed out in \cite[Remark 5.2 and 6.7]{Sh}, this generalized the regular functions obtained in \cite[Lemma 3.5.1]{BB2} associated with Borel subalgebras of finite types to the case of parabolic subalgebras of affine types (and of finite types by a simpler argument).  The generalized regular functions can be used to define Jantzen filtration for regular parabolic Verma modules \cite[5.5]{Sh}. In the integral case, the Jantzen filtration can be interpreted as a weight filtration \cite[\S5.1.3]{BB2} which coincides with the radical filtration by \cite{BG, C}. For the nonintegral case, note that the Jantzen filtration of a parabolic Verma module is determined by a homomorphism from itself to its dual \cite[Definition 1.6]{Sh}. Such a homomorphism are well preserved when reduced to integral cases by Soergel's results \cite{S2}.
\end{proof}

\begin{remark}
	In \cite{HX}, we obtained a sum formula for radical filtrations of parabolic Verma modules. We can reproduce this formula by the Jantzen sum formula for parabolic Verma modules (see \cite[Remark 4.11]{HX}) and Theorem \ref{thm3}.
\end{remark}


\begin{thebibliography}{EHW}


\bibitem[ALTV]{ALTV} J. Adams, M. van Leeuwen, P. Trapa and D. A. Vogan, Unitary representations of real reductive groups, Ast\'{e}risque  No. 417 (2020), \rmnum{8}, 188 pp.

\bibitem[APW]{APW} H. Andersen, P. Polo and K. Wen, Representations of quantum algebras, Invent. Math. \textbf{104} (1991),  no. 1, 1-59.






\bibitem[BB1]{BB1} A. Beilinson and J. Bernstein, Localisation de g-modules, C.R. Math. Acad. Sci. Paris, \textbf{292} (1981), 15-18.

\bibitem[BB2]{BB2} A. Beilinson and J. Bernstein, A proof of Jantzen conjectures, I.M. Gelfand Seminar, 1-50, Adv. Soviet Math., 16, Part 1, Amer. Math. Soc., Providence, RI, 1993.

\bibitem [BGG]{BGG} I. N. Bernstein, I. M. Gelfand and S. I. Gelfand, Category of $\frg$-modules, Funkcional. Anal. i Prilo\v{z}en. \textbf{10} (1976), No. 2, 1-8; English transl. Funct. Anal. Appl. 18 (1976), 87-92.


\bibitem[BG]{BG} A. Beilinson and V. Ginzburg, Mixed categories, Ext-duality and representations (results and conjectures), Preprint, Moscow (1986).

\bibitem [BN]{BN} B. D. Boe and L. Nakano, Representation type of the blocks of category $\caO_S$, Adv. Math. \textbf{196} (2005) 193-256.

\bibitem [B]{B} N. Bourbaki, Groupes et alg\`{e}bres de Lie, Chapters 4-6, Hermann, Paris, 1968; 2nd ed., Masson, Paris, 1981; English transl., Springer-Verlag, Berlin, 2002.

\bibitem [BK]{BK} J. L. Brylinski and M. Kashiwara, Kazhdan-Lusztig conjecture and holonomic systems, Invent. Math, \textbf{64} (1981), 387-410.



\bibitem [C]{C} L. Casian, Graded Characters of Induced Representations for Real Reductive Lie Groups I, J. Algebra, \textbf{123} (1989), 289-326.

\bibitem [CM]{CM} K. Coulembier and V. Mazorchuk, Dualities and derived equivalences for category $\caO$, Israel J. Math., \textbf{219} (2017), 661-706.








\bibitem [EMTW]{EMTW} B. Elias, S. Makisumi, U. Thiel and G. Williamson, Introduction to Soergel bimodules. RSME Springer Series, 5. Springer, Cham.

\bibitem [EW]{EW} B. Elias and G. Williamson, The Hodge theory of Soergel bimodules, Ann. of Math. (2) \textbf{180}, no. 3(2014), 1089-1136.


\bibitem [GJ]{GJ} O. Gabber and A. Joseph, Towards the Kazhdan-Lusztig conjecture, Ann. Sci. \'{E}cole Norm. Sup. (4) \textbf{14} (1981), 261-302.




\bibitem [HX]{HX} J. Hu and W. Xiao, Jantzen coefficients and radical filtrations of parabolic Verma modules, arxiv: 2004.08758.



\bibitem [H]{H3} J. Humphreys, Representations of semisimple Lie algebras in the BGG category $\mathcal{O}$, GSM. 94, Amer. Math. Soc. Providence, 2008.


\bibitem [I]{I2} R. S. Irving, Singular blocks of the category $\caO$, Math. Z. \textbf{204} (1990), 209-224.

\bibitem [JM]{JM} G. James, A. Mathas, A q-analogue of the Jantzen-Schaper theorem, Proc. London Math. Soc. (3) \textbf{74} (1997), no. 2, 241-274.



\bibitem [J]{J2} J. C. Jantzen, Moduln mit einem h\"{o}chsten Gewicht. Lecture Notes in Mathematics, 750. Springer, Berlin, 1979


\bibitem [KL]{KL} D. Kazhdan and G. Lusztig, Representations of Coxeter groups and Hecke algebras, Invent. Math. \textbf{53} (1979) 165-184.

\bibitem [KV]{KV} A. W. Knapp and D. A. Vogan, Cohomological induction and unitary representations, Princeton Mathematical Series, 45. Princeton University Press, Princeton, NJ, 1995.














\bibitem [M]{M} V. Mazorchuk, Lectures on algebraic categorification, The QGM Master Class Series, EMS, 2012.

\bibitem [SR]{SR} S. A. Salamanca-Riba, On the unitary dual of real reductive Lie groups and the $A_\frq(\lambda)$ modules: the strongly regular case. Duke Math. J. \textbf{96}(3), 521-546 (1999).

\bibitem [Sh]{Sh} P. Shan, Graded decomposition matrices of $v$-Schur algebras via Jantzen filtration. Represent. Theory, \textbf{16} (2012), 212-269.

\bibitem [S1]{S1} W. Soergel, \emph{$\frn$-cohomology of simple highest
weight modules on walls and purity}, Invent. Math. \textbf{98}
(1989), 565--580.

\bibitem [S2]{S2} W. Soergel, Kategorie O, Perverse Garben Und Moduln
Uber Den Koinvariantez Zur Weylgruppe, J. Amer. Math. Soc. \textbf{3} (1990) 421-445.


\bibitem [S3]{S3} W. Soergel, The combinatorics of Harish-Chandra bimodules. J. Reine Angew. Math. \textbf{429} (1992), 49-74.



\bibitem [S4]{S4} W. Soergel, Kazhdan-Lusztig-Polynome und unzerlegbare Bimoduln \"{u}ber Polynomringen. J.Inst. Math. Jussieu \textbf{6}(3)(2007), 501-525.

\bibitem [SZ]{SZ} Y. Su and R. Zhang, Generalised Jantzen filtration of Lie superalgebras I. J. Eur. Math. Soc. \textbf{14} (2012), no. 4, 1103-1133.




\bibitem [V]{V} D.A. Vogan, Unitarizability of certain series of representations, Ann. of Math. \textbf{120} (1984), no. 1, 141-187.





\bibitem [W]{W} G. Williamson, Local Hodge theory of Soergel bimodules, Acta Math. \textbf{217}(2016),  no. 2, 341-404.




\bibitem [Y1]{Y1} W. L. Yee, Signatures of invariant Hermitian forms on irreducible highest weight modules. Duke Math. J. \textbf{142} (2008), 165-196.

\bibitem [Y2]{Y2} W. L. Yee, Relating signed and classical Kazhdan-Lusztig polynomials. Duke Math. J. \textbf{163}(11) (2014), 2161-2178.

\end{thebibliography}
\end{document}